\documentclass{scrartcl}
\usepackage{amsmath,amsfonts,amssymb,amsthm,mathtools}
\usepackage{mathrsfs}
\usepackage{xcolor}
\usepackage{dsfont}
\usepackage{hyperref}
\usepackage{enumitem}
\usepackage[textsize=scriptsize]{todonotes}
\usepackage{comment}
\theoremstyle{definition}
 \newtheorem{dfn}{Definition}[section]
 \newtheorem{remark}[dfn]{Remark}  
\theoremstyle{plain}
 \newtheorem{thm}[dfn]{Theorem}
 
 \newtheorem{lem}[dfn]{Lemma}
 \newtheorem{cor}[dfn]{Corollary}

\numberwithin{equation}{section}
\newcommand{\R}{\mathbb{R}}

\newcommand{\N}{\mathbb{N}}

\newcommand{\dd}{{\mathrm d}}
\newcommand{\de}{{\mathrm d}}
\DeclareMathOperator{\dv}{div}

\DeclareMathOperator{\spa}{span}

\newcommand{\bbY}{\mathbb{Y}}

\newcommand{\calC}{\mathcal{C}}

\newcommand{\calL}{\mathcal{L}}
\newcommand{\calM}{\mathcal{M}}

\newcommand{\calP}{\mathcal{P}}

\newcommand{\calS}{\mathcal{S}}

\newcommand{\calV}{\mathcal{V}}

\newcommand{\frakR}{\mathfrak R}
\newcommand{\ov}[1]{\overline{#1}}

\newcommand{\norm}[1]{\lVert #1 \rVert}

\newcommand{\setcl}[2]{\big\{#1 \,\vert\, #2\big\}}
\newcommand{\setcL}[2]{\bigg\{#1 \,\Big\vert\, #2\bigg\}}
\newcommand{\BV}{\mathrm{BV}}
\newcommand{\f}[1]{{{ #1}}}

\newcommand{\E}{\mathcal{E}}
\newcommand{\LRsigma}[1]{L^{#1}_\sigma}
\DeclareMathOperator{\di}{div}
\newcommand{\sym}{\text{sym}}
\renewcommand{\O}{{\Omega}}
\DeclareMathOperator{\tr}{tr}

\newcommand{\EEE}{\color{black}}

\begin{document}
\title{On the equivalence of generalized solution concepts for systems of hyperbolic conservations laws in fluid dynamics}

\author{Thomas Eiter\thanks{Freie Universit\"at Berlin, Department of Mathematics and Computer Science, Arnimallee 14, 14195 Berlin, Germany; Weierstrass Institute for Applied Analysis and Stochastics, 
Anton-Wilhelm-Amo-Str. 39, 10117 Berlin, Germany,
\texttt{thomas.eiter@wias-berlin.de}}
\and
Robert Lasarzik\thanks{Weierstrass Institute for Applied Analysis and Stochastics, 
Anton-Wilhelm-Amo-Str. 39, 10117 Berlin, Germany,
\texttt{robert.lasarzik@wias-berlin.de}}
\and
Emil Wiedemann\thanks{
Department of Mathematics, Friedrich-Alexander-Universit\"{a}t Erlangen-N\"{u}rnberg, Cauerstr. 11, 91058 Erlangen, Germany,
\texttt{emil.wiedemann@fau.de}}
}
\date{\today}

\maketitle

 \begin{abstract}
We investigate the relation between several generalized solution concepts for nonlinear PDE systems from fluid dynamics. More precisely, we study measure-valued solutions, dissipative weak solutions, and energy-variational solutions. For the incompressible Euler equations, we prove the equivalence of all three concepts, provided that the energy inequality is formulated in the appropriate way. For several important examples of conservation laws arising in fluid dynamics, we establish the equivalence between energy-variational and suitably refined dissipative weak solutions, where the defect measures are controlled sharply by the energy defect. These examples comprise the compressible isentropic Euler system, the Euler--Korteweg system, and the Euler--Poisson system. 
\end{abstract}

\noindent
\textbf{MSC2020}:  
35D99, 
35Q31, 
35L65, 
35Q35, 
76B03, 
\\
\textbf{Keywords}: generalized solutions, measure-valued solutions, dissipative weak solutions, energy-variational solutions, Euler equations, Euler--Korteweg system, Euler--Poisson system, fluid dynamics.

\section{Introduction}

Generalized solutions are omnipresent in the theory of partial differential equations, especially fluid dynamics. It is well known that in general, smooth solutions do not exist globally in time, as shocks or even turbulence may arise.
The lack of compactness and the construction of infinitely many weak solutions to the incompressible Euler~\cite{delellis} and Navier--Stokes equations~\cite{buckmaster} suggest that there is an inherent loss of regularity. To deal with this issue, especially when proving global-in-time existence, there is a plethora of generalized solution concepts.
Even before Leray~\cite{leray1934} introduced his ``turbulent'', now called weak, solutions to the Navier--Stokes equations, other authors used generalized formulations for the investigation of different PDEs. 
Already Lagrange introduced a weak formulation for the one-dimensional clamped plate~\cite{lagrange}, and Poincar{\'e} considered a very weak formulation of the Poisson equation with Robin boundary conditions~\cite{poincare}. 
These concepts became very powerful some decades later when they were combined with functional analytic tools like weak convergence~\cite{wiener}. 

Nowadays weak solutions are the standard concept for nonlinear PDEs, but they have certain drawbacks. Many PDE systems lack the required compactness properties in order to pass to the limit in the weak formulation, and convex integration allows to construct infinitely many weak solutions. 
Therefore, more general solvability concepts are of interest.
For instance, measure-valued solutions were first introduced in the context of PDEs by Tartar~\cite{tartar} as an in-between step in the proof of the famous div-curl lemma. Measure-valued solutions as a proper solvability framework were first defined by DiPerna~\cite{MR775191} and DiPerna--Majda~\cite{DiPernaMajda} and have been investigated for different PDE systems since then, like the compressible Navier--Stokes system~\cite{NavCompMeas}, the Euler--Korteweg system~\cite{Korteweg} or the complete Euler system~\cite{CompletEuler}.
A different solvability concept based on a relative energy inequality
is due to Lions~\cite{lionsbook}, who introduced the notion of dissipative solutions, in particular for the limit passage from the Boltzmann equation to the Euler equations.
In~\cite{weakstrongeuler}, a weak-strong uniqueness principle for measure-valued solutions was shown, and the expectation of a measure-valued solution was identified as a dissipative solution. For many systems from fluid dynamics, the notion of dissipative weak solutions has become an indispensable solution concept. These solutions have been investigated, for instance, for the compressible Euler equations~\cite{Fereisl21_NoteLongTimeBehaviorDissSolEuler}, the Euler--Korteweg system~\cite{GGSW}, and the complete Euler system~\cite{CompletEuler}.

Recently, the concept of energy-variational solutions has been introduced for general hyperbolic conservation laws~\cite{eiterlasarzik2024envar} and general evolutionary PDE systems~\cite{abramo,Marcel}. 
This concept  allows to prove global-in-time existence for a relatively large class of systems via a minimizing-movement scheme. 
For the Euler equations for
incompressible and for compressible isentropic fluids, 
it was shown that these solutions are equivalent to dissipative weak solutions~\cite{eiterlasarzik2024envar}. This observation was recently extended to other systems of PDEs~\cite{RobVari}. For instance, in the case of the two-phase Navier--Stokes system, energy-variational solutions are equivalent to varifold solutions (\textit{cf.}~\cite{abels}), and in the case of the hyperbolic system modeling polyconvex elastodynamics, energy-variational solutions were shown to be equivalent to the already established Young measure-valued solutions (\textit{cf.}~\cite{demoulini}).
Moreover, energy-variational solutions were identified as a valuable technique for identifying the limits of solutions to structure-inheriting numerical schemes~\cite{LasarzikReiter2023,eitergiesselmann2026korteweg}. 

As all the considered generalized solution concepts in this area of nonlinear evolutionary PDEs exhibit multiple possibly nonunique solutions, the identification of appropriate selection criteria is important to select reasonable solutions under the multitude of possible generalized solutions. 
This question has recently been investigated for measure-valued~\cite{EmilMaiximalTurb}, dissipative weak~\cite{FeireislNew}, and energy-variational solutions~\cite{Marcel}.

So far, however, the question to what extent these solution concepts are related or even equivalent has not been fully answered.
In this work, we intend to close this gap by showing certain equivalence results between generalized solution concepts for different PDE systems from fluid dynamics.  

In the first part of the paper,
we consider the three most relevant contemporary ``very weak'' solution concepts for the incompressible Euler equations: measure-valued solutions, dissipative weak solutions and energy-variational solutions. 
The equivalence between the latter two was already shown in~\cite{eiterlasarzik2024envar}, while the equivalence of these concepts with measure-valued solutions is often taken for granted.
However, to the best of the authors’ knowledge, this has not been shown anywhere so far.
Using the appropriate formulation of the energy inequality (see Remark~\ref{rem:en.ineq} below),
we prove that all three concepts are equivalent.

We mention in passing two other related solution concepts that have been developed in the context of the incompressible Euler equations: The above-mentioned dissipative solutions of P.-L.~Lions~\cite{lionsbook} and the subsolutions from convex integration theory~\cite{delellis}. The former constitute a broader solution class compared with the solutions considered here: Every energy-variational or dissipative weak solution and every mean of a measure-valued solution (see Definitions~\ref{def:envarIncommp},~\ref{def:disssol.incomp} and~\ref{def:meas.incomp}) is a dissipative solution in the sense of Lions, but the converse is not expected (cf.~\cite[Remark 3.7]{abramo}). On the other hand, the subsolutions in convex integration or, in different terminology, the solutions of an Euler--Reynolds system are formally equivalent with what we call dissipative weak solutions; however, for convex integration, the subsolutions are usually supposed to have extra regularity (at least continuity or piecewise continuity).  

In the second part, we consider several more involved conservation laws 
describing compressible fluids, 
and we prove the equivalence of energy-variational and dissipative weak solutions in each case.
Actually, the considered concept of dissipative weak solutions is stronger than the usual definition, where the constant by which the defect measure is estimated in terms of the energy defect is usually left undetermined. In contrast, we determine this constant exactly and give an additional characterization of the defect measure. 
For these refined dissipative weak solutions, we show equivalence to the correct energy-variational formulation. 
We exemplify this equivalence for the isentropic Euler system, the Euler--Korteweg system, and the Euler--Poisson system. 

The article is structured as follows. 
In Section~\ref{sec:prelim}
we introduce the basic notation 
and we prepare several auxiliary results
that are the fundament of later established equivalence results.
The incompressible Euler equations are treated in Section~\ref{sec:incomp.euler},
where we relate the solution concepts of
energy-variational, dissipative weak and measure-valued solutions
with each other.
In the remaining sections, we focus on the equivalence between energy-variational and dissipative weak solutions,
which we establish for the isentropic Euler, the Euler--Korteweg and the Euler--Poisson system in Sections~\ref{sec:compEul},~\ref{sec:eulerkorteweg} and~\ref{sec:eulerpoisson}, respectively.

\section{Notation and preliminaries}
\label{sec:prelim}
The Frobenius product of two matrices $A,B \in \R^{m \times d}$, $m,d\in\N$, is represented by $A:B:=A_{ij}B_{ij}$, where we implicitly sum over repeated indices.
By ${I}_d \in \R^{d\times d}$ we denote the identity matrix,
and $\mathbb{R}^{d\times d}_{\mathrm{sym}}$ denotes the space of real symmetric
matrices.
We write $(A)_{\text{sym}}=\frac{1}{2}(A+A^T)$ for the symmetric part of $A \in \R^{d\times d}$, and $(A)_{\text{sym},-}$, $(A)_{\text{sym},+}$, and  $(A)_{\text{sym},0}$ denote the negative semi-definite, positive semi-definite, and symmetric trace-less parts of the symmetric matrix $(A)_{\text{sym}}$, respectively. 
We further use $(a)_+=\max\{a,0\}$ and $(a)_-=\max\{-a,0\}$
to denote the positive and negative part of a scalar $a\in\R$. 

For Lebesgue and Sobolev spaces on a domain $\Omega\subset\R^d$, we use the standard notation $L^p(\Omega)$, $W^{k,p}(\Omega)$ and $H^k(\Omega)=W^{k,2}(\Omega)$ 
for $p\in[1,\infty]$ and $k\in\N$.
Here, we always consider the Lebesgue measure $\calL_\Omega$ on $\Omega$.
If $\mu$ is a different measure on $\Omega$, we write $L^p(\Omega;\mu)$
for the corresponding Lebesgue spaces.
Let $T>0$ and $\mathbb X$ be a Banach space.
For $p\in[1,\infty]$, Lebesgue--Bochner spaces are denoted by 
$L^p(0,T;\mathbb X)$. 
Moreover, the space of weakly* measurable functions $f:[0,T]\to \mathbb X^*$ that are essentially bounded  
is denoted by $L^\infty_{w^*}(0,T;\mathbb X^*)$.
Similarly, we write $L^\infty_{w^*}(M;\mathbb X^*)$
and $L^\infty_{w^*}(M;\mu;\mathbb X^*)$ if we consider the corresponding spaces 
on a subset $M\subset\R^d$ equipped with the Lebesgue measure or a different measure $\mu$, respectively.
The space ${C}_{w^*}([0,T];\mathbb X^*)$ consists of all weakly* continuous functions $f:[0,T]\to \mathbb X^*$.

If $M\subset\R^d$, $d\in\N$, is open or closed, we write $C(M)$ and $C^k(M)$
for the continuous and the $k$-times continuously differentiable functions on $M$ for $k\in\N\cup\{\infty\}$. 
The class of finite Radon measures $\mathcal M(M)$ on $M$ is
equipped with the norm given by the total variation
\[
\|\mu\|_{\mathcal M(M)}=\int_{M}\dd{|\mu|}=|\mu|(M),
\]
where the measure $|\mu|$ is the variation of $\mu\in\mathcal M(M)$.
By $\calP(M)\subset\calM(M)$ we denote the subset of probability measures.
For a compact set $ M\subset \R^d$, the dual space of $C(M)$ can be identified $\mathcal M(M)$
in terms of the duality pairing $\langle\mu,\varphi\rangle:=\int_M\varphi\,\dd\mu$.
In the case of the whole space $M=\R^d$ 
or the unit sphere $M=\calS^{d-1}\subset\R^d$
we use $\xi\in\R^d$ and $\theta\in\calS^{d-1}$ as respective dummy variables to express elementary functions. 
For instance, we write
\[
\langle \mu,|\xi|^2\rangle=\int_{\R^d} |\xi|^2\,\dd\mu(\xi),
\qquad
\langle \mu,\theta\rangle=\int_{\calS^{d-1}} \theta\,\dd\mu(\theta).
\]

We next recall some fundamental results on normal cones.
Let $H$ be a Hilbert space with inner product $(\cdot ,\cdot)$ and $\calC \subset H$ be a closed convex cone. We call 
$$ \calC^\circ := \{   z \in  H\mid \forall  y \in\calC: \ (z,y)\leq 0  \}$$
the       normal     cone of $\calC$. 
By $ P_{\calC}$ we denote the orthogonal projection in $H$ onto $\calC$. 
\begin{lem}\label{lemcone}
    Let $\calC\subset H$ be a closed convex cone. Then it holds
    \begin{enumerate}[label=(\roman*)]
         \item 
         \label{lemcan:1}
         $\calC=\calC ^{\circ\circ}$, that is,
         it holds $ y \in \calC$ if and only if 
         $(z,y) \leq 0$ for all $z \in \calC^\circ 
        $.
        \item $ I - P_\calC = P_{\calC^\circ}$, where $I: H \to H $ denotes the identity mapping.
        In particular, for all $y \in \calC$ and $z\in H$ it holds 
        $(z,y) \leq (P_{\calC}(z),y)$.
        \label{lemcan:5}
\end{enumerate}
\end{lem}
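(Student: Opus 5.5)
The plan is to derive both statements from the variational characterization of the projection onto a closed convex set. For $z\in H$, the point $p=P_\calC(z)\in\calC$ is characterized by $(z-p,\,y-p)\le 0$ for all $y\in\calC$. Since $\calC$ is a cone, we may test with $y=0$ and with $y=2p$. This gives $(z-p,p)=0$, and then $(z-p,y)\le 0$ for every $y\in\calC$. Hence $z-P_\calC(z)\in\calC^\circ$ and $z-P_\calC(z)\perp P_\calC(z)$. This \emph{Moreau-type decomposition} is the central step; everything else follows from it.

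For \ref{lemcan:1}, the inclusion $\calC\subset\calC^{\circ\circ}$ is immediate from the definition of $\calC^\circ$. For the converse, take $y\in\calC^{\circ\circ}$ and set $z:=y-P_\calC(y)$. By the decomposition, $z\in\calC^\circ$, so $(z,y)\le 0$. On the other hand,
\[
(z,y)=(z,z)+(z,P_\calC(y))=\|z\|^2,
\]
because $z\perp P_\calC(y)$. Thus $\|z\|^2\le 0$, so $y=P_\calC(y)\in\calC$. The equivalent formulation with $(z,y)\le 0$ for all $z\in\calC^\circ$ is just the definition of $\calC^{\circ\circ}$.

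For \ref{lemcan:5}, fix $z$ and set $q:=z-P_\calC(z)\in\calC^\circ$. To show that $q=P_{\calC^\circ}(z)$, I will verify the variational inequality for the closed convex cone $\calC^\circ$: the condition is $(z-q,\,w-q)\le 0$ for all $w\in\calC^\circ$. Here $z-q=P_\calC(z)\in\calC$, so $(z-q,w)\le 0$ by the definition of $\calC^\circ$. Moreover $(z-q,q)=(P_\calC(z),\,z-P_\calC(z))=0$ by orthogonality. Together these give the inequality, and uniqueness of the projection yields $I-P_\calC=P_{\calC^\circ}$. Finally, for $y\in\calC$ we have
\[
(z,y)=(P_\calC(z),y)+(P_{\calC^\circ}(z),y)\le(P_\calC(z),y),
\]
since $P_{\calC^\circ}(z)\in\calC^\circ$.

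The only delicate point is the first step: the argument must use the cone property through the choices $y=0$ and $y=2p$ to obtain the orthogonality relation. Once that relation is available, both parts are short computations.
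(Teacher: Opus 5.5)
Your proof is correct, but it takes a different route from the paper's. For (i), the paper uses a Hahn--Banach separation argument: a point $y\notin\calC$ is separated from the cone by some $z\in\calC^\circ$ with $(z,y)>0$. For (ii), it cites Moreau's decomposition theorem $z=P_\calC(z)+P_{\calC^\circ}(z)$ with $(P_\calC(z),P_{\calC^\circ}(z))=0$ without proof, and then appeals to (i) to get $(P_{\calC^\circ}(z),y)\le 0$.

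You instead derive the Moreau decomposition directly from the variational characterization of the metric projection. The tests $y=0$ and $y=2p$ are exactly where the cone structure enters. Both the bipolar identity $\calC=\calC^{\circ\circ}$ and $I-P_\calC=P_{\calC^\circ}$ then follow in a few lines, and the Hilbert-space projection theorem replaces the separation theorem. This makes your argument self-contained: it actually proves the identity in (ii) that the paper only quotes. In the final inequality you also correctly use only the definition of $\calC^\circ$ (since $P_{\calC^\circ}(z)\in\calC^\circ$ and $y\in\calC$), so the paper's reference to (i) at that point is unnecessary.

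What the paper's route buys is generality. Its separation argument for (i) extends beyond Hilbert spaces, to the bipolar theorem for general dual pairs, whereas your approach relies on the existence of orthogonal projections onto closed convex sets.
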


\begin{proof}
By definition of the normal cone, we always have
$\mathcal C \subset \mathcal C^{\circ\circ}$.
To show the reverse inclusion, let $y \notin \mathcal C$.
The Hahn--Banach separation theorem
yields the existence of a vector $z \in H$ such that
$(z,x) \le 0$ for all $x \in \mathcal C$,
and $(z,y) > 0$.
Therefore, $z \in \mathcal C^\circ$ and $y \notin \mathcal C^{\circ\circ}$.
In summary, this shows~\ref{lemcan:1}.



To show~\ref{lemcan:5}, we use Moreau's decomposition theorem~\cite{ConeMoreau62},
which implies that every $z \in H$ can be uniquely decomposed as
$z = P_{\mathcal C}(z) + P_{\mathcal C^\circ}(z)$
with $(P_{\mathcal C}(z), P_{\mathcal C^\circ}(z)) = 0$.
Moreover, for $y\in\calC$, we have 
$
(P_{\mathcal C^\circ}(z), y) \le 0
$ by~\ref{lemcan:1},
which yields
\[
(z,y) = (P_{\mathcal C}(z), y)+(P_{\mathcal C^\circ}(z), y)\leq (P_{\mathcal C}(z), y)
\]
and completes the proof.
\end{proof}

We use these general results on cones to characterize cone-valued Radon measures by duality.
Let $\Omega \subset \mathbb{R}^d$, $d\in\N$, be a bounded Lipschitz domain and let
$\mathcal{C} \subset \mathbb{R}^m$, $m\in\N$, be a closed convex cone.
Denote by $\mathcal{M}(\ov\Omega;\mathbb{R}^m)$ the space of finite
$\mathbb{R}^m$-valued Radon measures on $\ov\Omega$.
We define the subset of Radon measures with values in a cone $\calC\subset\R^{m}$, via 
\[
\calM (\ov\Omega ; \calC)
= \setcl{\mu\in \calM (\ov\Omega ; \R^{m})}{\forall A\subset\ov\Omega \text{ Borel measurable}:\ \mu(A)\in\calC}.
\]

\begin{lem}[Characterization of cone-valued Radon measures]
\label{lem:duality.conevalued}
Let $\mathfrak{R} \in \mathcal{M}(\ov\Omega;\mathbb{R}^m)$,
and let $\calC\subset\R^m$ be a closed convex cone.
Then $\mathfrak{R} \in \mathcal{M}(\ov\Omega;\mathcal{C})$
if and only if
\[
\int_{\ov\Omega} B \cdot d\mathfrak{R} \le 0
\quad \text{for all } B \in C(\ov\Omega;\mathcal{C}^\circ).
\]
\end{lem}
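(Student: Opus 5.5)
The plan is to reduce everything to the pointwise cone characterization of Lemma~\ref{lemcone}\ref{lemcan:1} via the polar decomposition of $\frakR$. By the Radon--Nikodym theorem, we write $\frakR = g\,|\frakR|$ with $g\in L^1(\ov\Omega;|\frakR|;\R^m)$ and $|g|=1$ $|\frakR|$-a.e. The key intermediate claim is that $\frakR\in\calM(\ov\Omega;\calC)$ if and only if $g(x)\in\calC$ for $|\frakR|$-a.e.\ $x$. The direction ``$\Leftarrow$'' is immediate. Indeed, $\frakR(A)=\int_A g\,\dd|\frakR|$ is a limit of nonnegative combinations of values of $g$, so it lies in the closed convex cone $\calC$. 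More directly, for every $z\in\calC^\circ$ we have $z\cdot\frakR(A)=\int_A z\cdot g\,\dd|\frakR|\le 0$, and Lemma~\ref{lemcone}\ref{lemcan:1} gives $\frakR(A)\in\calC$. For ``$\Rightarrow$'', fix $z\in\calC^\circ$ and take $A=\{z\cdot g>0\}$. Then $0\ge z\cdot\frakR(A)=\int_A z\cdot g\,\dd|\frakR|$, so $|\frakR|(A)=0$. We then choose a countable dense subset $\{z_k\}$ of $\calC^\circ$. Outside a null set, $z_k\cdot g\le 0$ for all $k$, hence by density $z\cdot g\le0$ for all $z\in\calC^\circ$, and again Lemma~\ref{lemcone}\ref{lemcan:1} yields $g\in\calC$.

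Necessity of the integral condition is then straightforward. If $\frakR\in\calM(\ov\Omega;\calC)$ and $B\in C(\ov\Omega;\calC^\circ)$, then $\int B\cdot\dd\frakR=\int B\cdot g\,\dd|\frakR|\le0$, since $B(x)\cdot g(x)\le 0$ pointwise $|\frakR|$-a.e.

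Sufficiency is the step requiring the approximation argument. Assume $\int B\cdot\dd\frakR\le0$ for all continuous $\calC^\circ$-valued $B$. First, taking constant fields $B\equiv z$ multiplied by continuous $\varphi\ge0$ (note $\varphi z\in\calC^\circ$ since $\calC^\circ$ is a cone), we get $\int\varphi\,(z\cdot g)\,\dd|\frakR|\le0$ for all $\varphi\in C(\ov\Omega)$ with $\varphi\ge0$. The finite measure $(z\cdot g)|\frakR|$ is therefore nonpositive as a functional on nonnegative continuous functions. By the Riesz representation theorem, or by regularity of Radon measures on the compact set $\ov\Omega$, it is a nonpositive measure, so $z\cdot g\le0$ $|\frakR|$-a.e. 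Combined with the countable-density argument above, this gives $g\in\calC$ a.e., and hence $\frakR\in\calM(\ov\Omega;\calC)$.

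The main obstacle is precisely this passage from continuous test functions to arbitrary Borel sets. If needed, one can make it explicit by approximating $\mathds 1_A$ in $L^1(|\frakR|)$ by continuous functions $0\le\varphi_n\le1$ (Lusin/Urysohn on the compact metric space $\ov\Omega$) and using dominated convergence. With the test functions $\varphi z$, this avoids having to approximate general $\calC^\circ$-valued fields and sidesteps the question of whether $C(\ov\Omega;\calC^\circ)$ is dense in some $L^1$-type class of $\calC^\circ$-valued functions.
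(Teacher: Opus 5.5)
Your proof is correct and essentially follows the paper. Necessity goes via the polar decomposition $\dd\frakR=g\,\dd|\frakR|$, and sufficiency tests with $B=\varphi z$ for $z\in\calC^\circ$ and continuous $\varphi\ge0$, combined with $\calC=\calC^{\circ\circ}$; your argument with $\{z\cdot g>0\}$ and a countable dense subset of $\calC^\circ$ also proves that $g\in\calC$ holds $|\frakR|$-a.e., which the paper only asserts.

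The one difference is in sufficiency: the paper argues by contradiction (a Borel set $E$ with $\frakR(E)\notin\calC$, a separating $z$, regularity $K\subset E\subset U$, and an Urysohn function), whereas you obtain $z\cdot\frakR\le0$ directly from uniqueness in the Riesz representation. This also sidesteps a point the paper leaves implicit: since $z\cdot\frakR$ is only a signed measure, positivity of $\int_{\ov\Omega}\varphi\,z\cdot\dd\frakR$ requires choosing $K$ and $U$ with $|z\cdot\frakR|(U\setminus K)<z\cdot\frakR(K)$.
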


\begin{proof}
First assume that $\mathfrak{R} \in \mathcal{M}(\ov\Omega;\mathcal{C})$, and let
$B \in C(\ov\Omega;\mathcal{C}^\circ)$.
Using the Radon--Nikodým decomposition of $\mathfrak{R}$ with respect to its
total variation measure $|\mathfrak{R}|$, we may write
\[
    d\mathfrak{R}(x) = \theta(x)\, d|\mathfrak{R}|(x),
\]
where $\theta(x) \in \mathcal{C}$ for $|\mathfrak{R}|$-a.a.\ $x \in {\ov\Omega}$.
Hence,
\[
\int_{{\ov\Omega}} B \cdot d\mathfrak{R}
= \int_{{\ov\Omega}} B(x) \cdot \theta(x)\, d|\mathfrak{R}|(x)\leq 0
\]
since $B(x) \in \mathcal{C}^\circ$ and $\theta(x) \in \mathcal{C}$
implies $B(x) \cdot \theta(x) \le 0$ for $|\mathfrak{R}|$-a.a.\ $x \in {\ov\Omega}$.

Now assume that $\mathfrak{R} \notin \mathcal{M}({\ov\Omega};\mathcal{C})$.
Then there exists a Borel set $E \subset {\ov\Omega}$ such that
$\mathfrak{R}(E) \notin \mathcal{C}$. 
Since $\mathcal{C}$ is closed and convex, there
exists a vector $z \in \mathcal{C}^\circ$ such that
$
z \cdot \mathfrak{R}(E) > 0.
$
Since $z\cdot\mathfrak R$ is a scalar Radon measure and thus regular, 
there exist an open set $U$ and a compact set $K$
such that $K\subset E\subset U\subset\ov\Omega$
and 
$z \cdot \mathfrak{R}(U)\geq z \cdot \mathfrak{R}(E)\geq z \cdot \mathfrak{R}(K)>0$.
By Urysohn's lemma, there exists a continuous function 
 $\varphi \in C({\ov\Omega};[0,1])$ such that $\varphi \equiv  1 $ on $K$
 and $\varphi\equiv0$ in ${\ov\Omega} \setminus U$. 
Define $B(x) := \varphi(x)\, z$.
Then $B \in C({\ov\Omega};\mathcal{C}^\circ)$ and
\[
\int_{{\ov\Omega}} B \cdot d\mathfrak{R}
= \int_{{\ov\Omega}} \varphi \,z \cdot d\mathfrak{R}
\ge \int_{K} \varphi \,z \cdot d\mathfrak{R}
= z \cdot \mathfrak{R}(K) > 0.
\]

Combining both cases, 
we have shown the asserted equivalence.
\end{proof}

The existence of certain measures is later derived from the Riesz representation theorem and duality.
To  quantify the norm of the associated measures precisely,
one has to take into account the correct dual norm.
In particular, we will consider different norms on the space of symmetric matrices.

\begin{lem}[Matrix norms and duality for symmetric matrices]\label{lem:norm}
Denote the eigenvalues of $A\in\mathbb{R}^{d\times d}_{\mathrm{sym}}$ by
$\lambda_1(A),\dots,\lambda_d(A)\in\R$ and define the norms
\begin{align*}
|A|_2 &:= \max_{1\le i \le d} |\lambda_i(A)|,
&&\text{(spectral norm)}\\
|A|_{\mathrm{tr}} &:= \sum_{i=1}^d |\lambda_i(A)|,
&&\text{(trace/nuclear norm)}\\
|A|_{\mathrm F} &:= \sqrt{ A:A }
= \bigg(\sum_{i=1}^d \lambda_i(A)^2\bigg)^{1/2}.
&&\text{(Frobenius norm)}
\end{align*}
Then the dual norms with respect to the Frobenius product, 
defined by 
$A : B  = \mathrm{tr}(A^T B)$ for $A,B\in\mathbb{R}^{d\times d}_{\mathrm{sym}}$,
satisfy
\[
|A|_2^* = |A|_{\mathrm{tr}},\quad
|A|_{\mathrm{tr}}^* = |A|_2, \quad
|A|_{\mathrm F}^* = |A|_{\mathrm F}
\,.
\]
\end{lem}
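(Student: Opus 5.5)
The plan is to reduce everything to the diagonal case using the orthogonal invariance of the Frobenius product. Recall that the dual norm is
\[
|A|^* = \sup\setcl{A:B}{B\in\R^{d\times d}_{\mathrm{sym}},\ |B|\le 1}.
\]
For the Frobenius norm, $|A|_{\mathrm F}^*=|A|_{\mathrm F}$ follows at once from the Cauchy--Schwarz inequality for the inner product $A:B$, with equality at $B=A/|A|_{\mathrm F}$. Since $\R^{d\times d}_{\mathrm{sym}}$ is finite-dimensional, the bidual of a norm is the norm itself. Hence once $|\cdot|_2^*=|\cdot|_{\mathrm{tr}}$ is shown, the identity $|\cdot|_{\mathrm{tr}}^*=|\cdot|_2$ follows by taking duals. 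I will nevertheless sketch it directly, since it is equally short.

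Write $A=Q\Lambda Q^T$ with $Q$ orthogonal and $\Lambda=\mathrm{diag}(\lambda_1(A),\dots,\lambda_d(A))$. For $B$ symmetric set $\tilde B=Q^TBQ$. Then $A:B=\tr(\Lambda\tilde B)=\sum_i\lambda_i(A)\tilde B_{ii}$, and $\tilde B$ has the same eigenvalues as $B$, so all three norms are unchanged under this conjugation. The key elementary estimate is $|\tilde B_{ii}|=|e_i\cdot\tilde Be_i|\le|\tilde B|_2$, by the variational (Rayleigh quotient) characterization of the spectral norm of a symmetric matrix.

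\textbf{First identity.} The estimate gives $A:B\le\sum_i|\lambda_i(A)|\,|B|_2=|A|_{\mathrm{tr}}|B|_2$. For equality, choose $B=Q\,\mathrm{diag}(\operatorname{sgn}\lambda_i(A))\,Q^T$, which satisfies $|B|_2\le1$ and $A:B=|A|_{\mathrm{tr}}$.

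\textbf{Second identity.} Here $A:B=\sum_i\lambda_i(A)\tilde B_{ii}\le|A|_2\sum_i|\tilde B_{ii}|$. I then need $\sum_i|\tilde B_{ii}|\le|B|_{\mathrm{tr}}$. To get it, write $\tilde B=\sum_j\mu_jv_jv_j^T$ in its spectral decomposition. Then
\[
\sum_i|\tilde B_{ii}|\le\sum_j|\mu_j|\sum_i(v_j\cdot e_i)^2=\sum_j|\mu_j|=|B|_{\mathrm{tr}}.
\]
For equality, choose $B=Qe_ke_k^TQ^T$, where $k$ maximizes $|\lambda_k(A)|$, multiplied by $\operatorname{sgn}\lambda_k(A)$. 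This $B$ has trace norm $1$ and attains $A:B=|A|_2$.

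The only slightly nontrivial step is the diagonal bound $\sum_i|\tilde B_{ii}|\le|B|_{\mathrm{tr}}$. Even this is handled by the spectral decomposition above, so I expect no real obstacle.
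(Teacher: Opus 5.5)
Your proof is correct. It takes a more hands-on route than the paper, which gives no real argument: it identifies $|\cdot|_2$, $|\cdot|_{\mathrm{tr}}$, $|\cdot|_{\mathrm F}$ as the Schatten norms $S_\infty$, $S_1$, $S_2$ on $\R^{d\times d}_{\mathrm{sym}}$, appeals to the general duality $S_p^*=S_{p'}$ from the literature, and calls the rest elementary.

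You carry out that elementary calculation yourself. After diagonalizing $A$, the pairing becomes $\sum_i\lambda_i(A)\tilde B_{ii}$. The two nontrivial dualities then reduce to the bounds $|\tilde B_{ii}|\le|B|_2$ and $\sum_i|\tilde B_{ii}|\le|B|_{\mathrm{tr}}$, together with explicit maximizers.

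The citation buys generality: all $p\in[1,\infty]$, and operators on infinite-dimensional Hilbert spaces. However, that theory is formulated on all matrices or operators. It therefore tacitly needs the fact that restricting the supremum to symmetric $B$ does not change the dual norm. This holds because for symmetric $A$ one has $A:B=A:(B)_{\mathrm{sym}}$, and $(B)_{\mathrm{sym}}$ has Schatten norm at most that of $B$.

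Your argument works in $\R^{d\times d}_{\mathrm{sym}}$ from the start, which is exactly the space on which the lemma defines the duals. It uses only the spectral theorem and the Rayleigh-quotient bound, and it shows that the suprema are attained. It would not extend to intermediate $p$ without an extra tool such as von Neumann's trace inequality, but the lemma does not need that. Finally, your direct proof of the second identity is redundant given your bidual remark, but harmless.
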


\begin{proof}
All three norms are examples of Schatten norms defined on $\mathbb{R}^{d\times d}_{\mathrm{sym}}$,
and the asserted duality properties follow from elementary calculations.
For the correspondence in infinite-dimensional Hilbert spaces,
an overview can be found in~\cite[Appendix D]{hytoenen2016analysis} for example.
\end{proof}

The following lemma will be the central tool to
show sharp equivalence results 
between energy-variational and dissipative weak solutions.

\begin{lem}\label{lem:equi}
    Let $\mathbb Y \subset L^1(0,T;C^1({\ov\Omega} ; \R^m))$ 
    be a dense linear subspace.
    Let $\ell : \mathbb Y \to \R$ be a linear mapping,
    and let $\zeta \in L^\infty(0,T)$ with $\zeta\geq 0$ a.e.~in $(0,T)$. 
    Let $\calC _1,\,\calC_2\subset\R^{m\times d}$ be two cones, 
    and define $ \calL_i = \spa(\calC_i)$ for $i\in\{1,2\}$. 
    Then the following two assertions are equivalent: 
    \begin{enumerate}[label=(\roman*)]
        \item \label{item:1} 
        There exist $\frakR_1\in L^\infty_{w^*}(0,T;\mathcal{M}({\ov\Omega};\calC_1))$ and $\frakR_2\in L^\infty_{w^*}(0,T;\mathcal{M}({\ov\Omega};\calC_2))$ such that
        \begin{align}\label{eq:normfrakR}
        \begin{split}
            \langle\ell , \varphi\rangle =\int_0^T \bigg[\int_{{\ov\Omega}}
            \nabla \varphi
            : \dd \frakR_1 + \int_{{\ov\Omega}}
            \nabla \varphi
            : \dd\frakR_2 \bigg]\,\dd t \quad 
            &\text{for all }\varphi\in\mathbb Y,\\  \frac{1}{\eta_1}\| \frakR_1\|_{\mathcal{M}({\ov\Omega};\calL_1^*)} + \frac{1}{\eta_2}\| \frakR_2\|_{\calM({\ov\Omega};\calL_2^*)} \leq \zeta \quad 
            &\text{a.e.~in }(0,T) \,.
            \end{split}
        \end{align}
        \item \label{item:2}
        For all $\varphi\in \mathbb Y$  it holds 
        \begin{align}
            \label{eq:estlinear}
            \langle\ell, \varphi\rangle \leq \int_0^T\max\left \{ \eta_1 \| P_{\calC_1}(\nabla\varphi) \|_{L^\infty(\Omega;\calL_1)} , \eta_2 \| P_{\calC_2}(\nabla\varphi)\|_{L^\infty(\Omega;\calL_2)}\right\}\zeta\,\dd t. 
        \end{align}
    \end{enumerate}
    \end{lem}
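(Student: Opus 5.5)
The two implications have different character. The direction (i)$\Rightarrow$(ii) is a pointwise duality estimate. The converse (ii)$\Rightarrow$(i) is a Hahn--Banach/Riesz representation argument in a weighted $L^1(0,T;C)$-type space, followed by an identification of the representing measures as cone-valued by means of Lemma~\ref{lem:duality.conevalued}.

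\emph{Step 1: (i)$\Rightarrow$(ii).} Fix $\varphi\in\mathbb Y$ and a.e.\ $t$. Since $\frakR_i(t)$ takes values in $\calC_i\subset\calL_i$, only the component of $\nabla\varphi$ in $\calL_i$ is seen, so $\int\nabla\varphi:\dd\frakR_i=\int P_{\calL_i}(\nabla\varphi):\dd\frakR_i$. Write $\dd\frakR_i=\theta_i\,\dd|\frakR_i|$ with $\theta_i\in\calC_i$. Lemma~\ref{lemcone}\ref{lemcan:5}, applied pointwise in the Hilbert space $\calL_i$, gives $(\nabla\varphi):\theta_i\le P_{\calC_i}(\nabla\varphi):\theta_i$. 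By definition of the dual norm, this yields
\[
\int_{\ov\Omega}\nabla\varphi:\dd\frakR_i\le \|P_{\calC_i}(\nabla\varphi)\|_{L^\infty(\Omega;\calL_i)}\|\frakR_i\|_{\calM(\ov\Omega;\calL_i^*)}.
\]
(Boundary points are harmless by continuity of $\nabla\varphi$.) Summing over $i$, writing $\|\frakR_i\|=\eta_i\cdot\frac{1}{\eta_i}\|\frakR_i\|$, bounding each $\eta_i\|P_{\calC_i}\nabla\varphi\|$ by the maximum, and using the second line of \eqref{eq:normfrakR} gives \eqref{eq:estlinear} after integrating in time.

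\emph{Step 2: (ii)$\Rightarrow$(i), extension.} Consider the space $X=L^1(0,T;C(\ov\Omega;\calL_1)\times C(\ov\Omega;\calL_2))$ with the sublinear functional
\[
p(B_1,B_2)=\int_0^T\max\{\eta_1\|P_{\calC_1}B_1\|_{L^\infty(\calL_1)},\eta_2\|P_{\calC_2}B_2\|_{L^\infty(\calL_2)}\}\zeta\,\dd t,
\]
which is sublinear because $P_{\calC}$ is positively homogeneous and subadditive on cones with respect to this type of bound. It is also dominated by $C\|(B_1,B_2)\|_X$ since $\zeta\in L^\infty$. On the subspace $\{(P_{\calL_1}\nabla\varphi,P_{\calL_2}\nabla\varphi):\varphi\in\mathbb Y\}$ define $\tilde\ell(\ldots)=\langle\ell,\varphi\rangle$.

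The main obstacle is well-definedness. If two $\varphi$ give the same pair of projected gradients, I must show that $\ell$ agrees on them. This follows from (ii) applied to $\pm$ the difference, since $p$ vanishes on it. I expect subtleties when $\calL_1$ and $\calL_2$ do not together span all of $\R^{m\times d}$, or overlap. I would therefore handle the general case by working with the pair map rather than $\nabla\varphi$ itself; the right-hand side of \eqref{eq:estlinear} depends on $\varphi$ only through this pair.

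Hahn--Banach then extends $\tilde\ell$ to a functional $L$ on $X$ with $L\le p$. Since $|L|\le C\|\cdot\|_X$, the dual $L^1(0,T;C)^*=L^\infty_{w^*}(0,T;\calM)$ gives measures $\frakR_1,\frakR_2$ with $\frakR_i(t)\in\calM(\ov\Omega;\calL_i^*)$ representing $L$. This yields the identity in \eqref{eq:normfrakR}.

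\emph{Step 3: cone values and norm bound.} For $B_1\in C(\ov\Omega;\calC_1^\circ\cap\calL_1)$ (polar taken in $\calL_1$), multiplied by any nonnegative $\psi\in L^1(0,T)$, we have $P_{\calC_1}B_1=0$. Hence $L(\psi B_1,0)\le0$, so $\int B_1:\dd\frakR_1(t)\le0$ for a.e.\ $t$, using a countable dense family of such $B_1$ to handle the null sets. Lemma~\ref{lem:duality.conevalued} then gives $\frakR_1(t)\in\calM(\ov\Omega;\calC_1)$, and likewise for $\frakR_2$.

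For the norm bound, take arbitrary $B_i$ with $\|B_i\|_{L^\infty(\calL_i)}\le 1/\eta_i$ and nonnegative $\psi$. Using $P_{\calC_i}B_i$-norm $\le\|B_i\|$, we get
\[
\int\psi\Big(\int B_1:\dd\frakR_1+\int B_2:\dd\frakR_2\Big)\,\dd t\le\int\psi\,\zeta\,\dd t.
\]
Taking the supremum over countable dense families of $B_i$ yields $\frac1{\eta_1}\|\frakR_1\|+\frac1{\eta_2}\|\frakR_2\|\le\zeta$ a.e. This uses that the dual of the max-norm $\max\{\eta_1\|\cdot\|,\eta_2\|\cdot\|\}$ is the weighted sum norm.
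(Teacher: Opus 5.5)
Your proposal follows the paper's proof essentially step for step. For (i)$\Rightarrow$(ii) you use Moreau's decomposition plus the dual-norm pairing. For (ii)$\Rightarrow$(i) you define the functional on the subspace of projected-gradient pairs $(P_{\calL_1}\nabla\varphi,P_{\calL_2}\nabla\varphi)$, show it is well defined by applying (ii) to $\pm$ the difference of two representatives, extend it by Hahn--Banach with the same majorant $p$, represent it via Riesz in $L^\infty_{w^*}(0,T;X^*)$ with the weighted-sum dual norm, and obtain the cone constraint from Lemma~\ref{lem:duality.conevalued}. Your nonnegative time weights and countable dense families make the a.e.-in-$t$ conclusions more careful than the paper, which tests only with time-independent $(A,0)$.

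Like the paper, you tacitly need two properties on $\calL_i$: that $z\mapsto\|P_{\calC_i}z\|_{\calL_i}$ is sublinear, and that $\|P_{\calC_i}z\|_{\calL_i}\le\|z\|_{\calL_i}$. Your appeal to subadditivity of $P_\calC$ does not justify the first, and it fails for general norms: for example, $\calC=[0,\infty)^2$ with $\|(x,y)\|=\max\{|x-2y|,|y|\}$ violates sublinearity. Both properties do hold for the Frobenius norm and for the cone/norm pairs used in the applications, so they should be stated as a hypothesis of the lemma.
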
 
    \begin{remark}
    We note that the spaces $\calL_i$ are finite-dimensional, so that they can be identified with their dual spaces. Nevertheless, we use the notation $\calL_i^*$ in order to emphasize that $\calL_i^*$ is equipped
    with the corresponding dual norm.
    This makes the stated estimates rigorous without any additional absolute constants,
    which can arise by passing to a different norm.
    \end{remark}
\begin{proof}
    Let~\ref{item:1} be fulfilled. 
    From Lemma~\ref{lemcone} we conclude $ \int_{\ov\Omega} P_{\calC_i^\circ}(\nabla\varphi)\,\dd \frakR_i \leq 0 $ for $i=1,2$,
    so that we can estimate
    \begin{align*}
         \langle\ell , \varphi\rangle 
         \leq{}&\sum_{i=1}^2\int_0^T\int_{{\ov\Omega}}P_{\calC_i}(\nabla \varphi ) : \dd \frakR_i\dd t
         \\
         \leq{}& \sum_{i=1}^2 \int_0^T\eta_i\| P_{\calC_i}(\nabla \varphi ) \|_{L^\infty(\Omega;\calL_i )}\frac{1}{\eta_i}\| \frakR_i\|_{\calM({\ov\Omega};\calL_i^*)} \dd t 
         \\
         \leq{}& \int_0^T\max\left \{ \eta_1 \| P_{\calC_1}(\nabla\varphi) \|_{L^\infty(\Omega;\calL_1)} , \eta_2 \| P_{\calC_2}(\nabla\varphi)\|_{L^\infty(\Omega;\calL_2)}\right\}
         \zeta \,\dd t \,,
    \end{align*}
    which is exactly~\ref{item:2}. 

    Now let~\ref{item:2} be fulfilled. 
We  consider the product space $ X := C({\ov\Omega} ;\calL_1) \times C({\ov\Omega} ; \calL_2) $ equipped with the norm $ \| (\Psi_1,\Psi_2)\|_{X}:= \max_{i\in\{1,2\}}\{ \eta_i \| \Psi_i\|_{C({\ov\Omega};\calL_i)}\}$ 
and observe that the dual space may be identified with $ X^*= \calM({\ov\Omega};\calL_1^*)\times \calM({\ov\Omega};\calL_2^*)$ equipped with $ \| (\frakR_1 , \frakR_2) \|_{X^*} = 
\frac{1}{\eta_1}\| \frakR_1(t) \|_{\calM({\ov\Omega};\calL_1^*)}+ \frac{1}{\eta_2}\| \frakR_2(t) \|_{\calM({\ov\Omega};\calL_2^*)}$
using the usual duality pairing induced by the Riesz representation theorem.
On the space
\[
\begin{aligned}
\bar{X} := \big\{(\Psi_1,\Psi_2)\in  L^1(0,T;X) \mid \exists  \varphi \in \mathbb Y : \ & P_{\calL_i}(\nabla \varphi(x,t)) = \Psi_i(x,t) 
\\ 
& \text{ for all } i \in \{1,2\}, \,t\in(0,T),\,x\in\ov\Omega\big\}
\end{aligned}
\]
we define the linear mapping $ \tilde{\ell}: \bar{X} \to \R$ via 
$$
\langle \tilde{\ell} ; (\Psi_1,\Psi_2) \rangle = \langle \ell ,\varphi  \rangle  \,.
$$
First, we observe that this mapping is well defined. Indeed, let $\varphi \in \mathbb{Y}$ and $\psi  \in \mathbb{Y}$ be two elements such that $P_{\calL_i}(\nabla \varphi ) = \Psi_i=P_{\calL_i}(\nabla \psi )  $ in $(0,T)\times\ov\Omega$ 
for $i\in \{1,2\}$. Then it holds 
\begin{align*}
    \langle \ell , \varphi  \rangle - \langle \ell , \psi\rangle
    = 
    {}& \langle \ell , (\varphi-\psi)\rangle \\\leq{}& \max\left \{ \eta_1 \| P_{\calC_1}(\nabla(\varphi-\psi)) \|_{L^\infty(\Omega;\calL_1)} , \eta_2 \| P_{\calC_2}(\nabla(\varphi-\psi))\|_{L^\infty(\Omega;\calL_2)}\right\}\zeta 
    =0\,
\end{align*}
by~\eqref{eq:estlinear}.
Thus, the value $\langle\tilde \ell,(\Psi_1,\Psi_2)\rangle$
does not depend on the chosen representative $\varphi$, and $\tilde\ell$ is well defined on $\bar{X}$. 
Moreover, $\tilde\ell$ is linear.
Now, we may apply the Hahn--Banach theorem (for instance,~\cite[Thm.~1.1]{brezis2011fa})
in order to extend $ \tilde\ell$ to a linear mapping $\hat\ell : L^1(0,T;X) \to \R$ such that
\begin{align*}
    \langle \hat\ell , (\Psi_1,\Psi_2)  \rangle &= \langle \tilde{\ell} , (\Psi_1,\Psi_2) \rangle  \qquad \text{for all } (\Psi_1,\Psi_2)\in \bar{X}\,,\\
    \langle \hat\ell , (\Psi_1,\Psi_2) \rangle &\leq \int_0^T
    \max\left \{ \eta_1 \| P_{\calC_1}(\Psi_1) \|_{L^\infty(\Omega;\calL_1)} , \eta_2 \| P_{\calC_2}(\Psi_2)\|_{L^\infty(\Omega;\calL_2)}\right\}\zeta\,\dd t
    \\
    & \leq \int_0^T\| (\Psi_1,\Psi_2)\|_{X}\zeta \,\dd t  
    \qquad \text{for all } (\Psi_1,\Psi_2)\in L^1(0,T;X)\,.
\end{align*}
By the aforementioned duality due to the Riesz representation theorem, we obtain $ 
\frakR_i\in L^\infty_{w^*}(0,T;\calM({\ov\Omega};\calL_i^*))$ for $i\in\{1,2\}$ with 
\begin{align*}
     \int_0^T\bigg[\int_{{\ov\Omega}} \Psi_1 : \dd \frakR_1 + \int_{{\ov\Omega} } \Psi_2 : \dd \frakR_2\bigg]\dd t  =  \langle \hat\ell , (\Psi_1,\Psi_2)\rangle\leq \int_0^T\| (\Psi_1,\Psi_2)\|_{X}\zeta \,\dd t 
\end{align*}
for all $ (\Psi_1,\Psi_2) \in L^1(0,T;X)$.
From this, we conclude~\eqref{eq:normfrakR}. 
Moreover, we have
    \begin{align*}
        &\int_0^T\bigg[\int_{{\ov\Omega}} \Psi_1 : \dd \frakR_1 + \int_{{\ov\Omega} } \Psi_2 : \dd \frakR_2\bigg]\dd t 
        \\
        &\qquad
        \leq \int_0^T\max\left \{ \eta_1 \| P_{\calC_1}(\Psi_1) \|_{L^\infty(\Omega;\calL_1)} , \eta_2 \| P_{\calC_2}(\Psi_2)\|_{L^\infty(\Omega;\calL_2)}\right\}\zeta\dd t \,.
    \end{align*}
By choosing $(\Psi_1,\Psi_2)\equiv(A,0)$ for any $A\in C(\ov\Omega;\calC_1^\circ)$,
we observe that $\int_{{\ov\Omega}} A : \dd \frakR_1\leq 0 $.
Therefore, Lemma~\ref{lem:duality.conevalued} implies that 
$\frakR_1 \in L^\infty_{w^*}(0,T;\mathcal{M}({\ov\Omega} ; \calC_1))$,
and a similar argument shows that $ \frakR_2 \in L^\infty_{w^*}(0,T;\mathcal{M}({\ov\Omega} ; \calC_2))$.
We have thus shown~\ref{item:1}, which finishes the proof. 
\end{proof}

\section{The incompressible Euler equations}
\label{sec:incomp.euler}

Let $\Omega\subset\R^d$ be a bounded Lipschitz domain and $T>0$. 
The  Euler equations for incompressible fluids are given by
\begin{subequations}\label{eq:Incomp}
\begin{alignat}{2}
\partial_t \f v + ( \f v \cdot \nabla ) \f v + \nabla p = \f 0 , \quad \di \f v ={}& 0 \qquad && \text{in }\Omega \times (0,T)\,,
\\
\f v \cdot \f n = {}& 0 \qquad &&\text{on }\partial \Omega\times (0,T) \,,
\\
\f v (0) ={}& \f v_0 \qquad && \text{in } \Omega \,.
\end{alignat}
\end{subequations}
Here, $ \f v : \Omega\times (0,T) \to \R^d $ denotes the velocity of the fluid and $p : \Omega\times (0,T) \to \R$ denotes the pressure. 
We define the energy  $\E : \LRsigma{2}(\Omega) \to \R$ 
with $\E(\f v) := \frac{1}{2} \norm{\f v }_{L^2(\Omega)}^2$, where $\LRsigma{2}(\Omega)$ denotes the closure of the set
$\setcl{\varphi\in C^\infty_c(\Omega;\R^d)}{\dv\varphi=0}$
in $L^2(\Omega;\R^d)$. 

We introduce the concepts of energy-variational, dissipative weak and measure-valued solutions
to~\eqref{eq:Incomp} as follows.

\begin{dfn}\label{def:envarIncommp}
A pair $(\f v , E )\in  L^\infty(0,T; L^2_\sigma ( \Omega ))  \times \BV([0,T];\R) $  is called an \textit{energy-variational solution} to the incompressible Euler equations~\eqref{eq:Incomp}
if $E(t) \geq \E (\f v(t) )$ for a.e.~$t\in (0,T)$
and if the inequality
\begin{equation}
\left [ E - \int_{\Omega} 
 \f v  \cdot \f \varphi 
\,\dd  \f x  \right ] \bigg|_s^t 
+ \int_s^t \int_{\Omega}
\f v \cdot \partial_t \f \varphi 
+ { \f v  \otimes \f v}  : \nabla \f \varphi 
\,\dd  \f x 
+ \mathcal{K}(\f \varphi ) \left [ \E(\f v) - E\right ] 
\dd \tau
\leq 0
\,\label{relenIncomp}
\end{equation}
holds for all test functions $\f \varphi\in C^1(\ov \Omega \times [0,T]; \R^{d})$ with $ \di \f \varphi = 0$ in $\Omega \times [0,T]$, and $ \f \varphi \cdot \f n = 0 $ on $ \partial \Omega\times [0,T]$
and for a.a.~$s,t\in(0,T) $, $s<t$, including $s=0$ with $\f v (0)  =  \f v_0 $,
where
\begin{equation}\label{eq:regweight.incomp}
\mathcal{K}(\f \varphi) =2
\|(\nabla \f \varphi )_{\sym,-}\| _{L^\infty(\O;\R^{d\times d})}\,.
\end{equation}
\end{dfn}

 \begin{dfn}\label{def:disssol.incomp}
A pair   $(\f v , E )\in  L^\infty(0,T; L^2_\sigma ( \Omega ))  \times \BV([0,T];\R) $ is called
a \textit{dissipative weak solution} to the incompressible Euler equations~\eqref{eq:Incomp} if there exists a \textit{Reynolds defect}  $ \frakR \in  L^\infty _{w^*} (0,T;\mathcal{M}(\ov \O ; \mathbb{R}^{d\times d}_{\sym,+}))$ such that the equation 
\begin{align}
\int_{\Omega} \f v \cdot  \f \varphi\,\dd  \f x \Big|_s^t
=  \int_s^t\!\! \int_{\Omega} \f v \cdot \partial_t \f \varphi  +  \f v \otimes \f v:  \nabla \f \varphi\,\dd  \f x\dd  \tau   + \int_s^t\!\!\int_{\Omega}   \nabla \f \varphi:\dd   \frakR_\tau \dd  \tau \label{measeq}
\end{align}
holds for all $\f \varphi\in C^1(\ov \Omega \times [0,T]; \R^{d})$ with $ \di \f \varphi = 0$ in $\Omega \times [0,T]$ and $ \f \varphi \cdot \f n = 0 $ on $ \partial \Omega\times [0,T]$,
and for  a.a.~$s,t\in(0,T) $, including $s=0$ with $\f v(0)=\f v_0$,
and if $E$ is a   non-increasing function 
such that 
\begin{equation}
\E(\f v (t))+ \frac{1}{2}\int_{\Omega} I :  \dd  \frakR_t   \leq E(t)   \,\label{diseneq}
\end{equation}
for a.a.~$t\in(0,T)$.
\end{dfn}
 \begin{dfn}\label{def:meas.incomp}
We call a triple   $(\nu^0,\lambda , \nu^\infty )$ 
a \textit{measure-valued solution} to the incompressible Euler equations~\eqref{eq:Incomp} if 
\begin{align*}
\nu^o &\in L_{w^*}^\infty(\Omega\times(0,T);\mathcal{P}(\R^d)),\\
\lambda &\in L^\infty_{w^*}(0,T; \calM^+(\ov{\Omega}) ), \\
\nu^\infty &\in L^\infty_{w^*}(\ov{\Omega}\times(0,T) ;   \lambda_t \otimes \mathcal{L}_{(0,T)}; \calP(\calS^{d-1})  )),
\end{align*}
if 
the equation 
\begin{equation}
\begin{aligned}
  \int_{\Omega} \langle \nu^o, \xi\rangle \cdot  \f \varphi\,\dd  \f x \Big|_s^t =&\int_s^t\!\! \int_{\Omega}\langle \nu^o, \xi\rangle  \cdot \partial_t \f \varphi  +  \langle \nu^o , \xi \otimes \xi\rangle :  \nabla \f \varphi\,\dd  \f x\dd  \tau  \\
  &\quad+\int_s^t\!\!\int_{\ov\Omega}   \nabla \f \varphi: \langle \nu^\infty, \theta\otimes \theta \rangle \,\dd \lambda \dd  \tau  \label{measeq2}
\end{aligned}
\end{equation}
holds for all for all test functions $\f \varphi\in C^1(\ov \Omega \times [0,T]; \R^{d})$ with $ \di \f \varphi = 0$ in $ \Omega \times [0,T]$ and  $ \f \varphi \cdot \f n = 0 $ on $ \partial \Omega\times [0,T]$, 
and for  a.a.~$s,t\in(0,T) $, $s<t$, 
and if
 \begin{equation}
    \langle \nu^o_{\cdot,t},\xi\rangle \in L^2_{\sigma}(\Omega)  \label{eq:measdiv}
\end{equation}
for  a.e.~$t\in(0,T)$,
and if it holds
\begin{equation}
 \frac{1}{2}\int_\Omega \langle \nu^o_{x,t}, |\xi|^2 \rangle\,\dd x + \frac{1}{2}\lambda_t(\ov\Omega)  \leq   \frac{1}{2}\int_\Omega \langle \nu^o_{x,s}, |\xi|^2 \rangle \dd x + \frac{1}{2}\lambda_s(\ov\Omega)  \,\label{measeneq}
\end{equation}
for a.a.~$s,t\in(0,T) $, $s<t$. 
The initial value is attained via $  \langle \nu^o_{x,0}, \xi\rangle = \f v_0(x) $, 
where the additional regularity $ t \mapsto \int_\Omega \langle \nu^o_{x,t} , \xi \rangle \cdot \varphi (x) \,\dd x \in C([0,T])$ for all $\varphi \in C(\ov\Omega)$  is used, which is a consequence of~\eqref{measeq2}. 
\end{dfn}
In fact, the weak continuity in time of the velocity field, that is, the continuity of $t\mapsto\int_\Omega \f v(t)\cdot \f \varphi(t)\, \dd x$ for all $\varphi\in L^2_\sigma(\Omega)$,
is also true for energy-variational and dissipative solutions, as follows from~\cite[Lemma 8]{delellis}.

\begin{remark}
\label{rem:en.ineq}
    All three solution concepts include a global energy inequality in a strong form,
    that is, an inequality for the total energy at different times $s<t$.
    For energy-variational solutions and dissipative weak solutions, 
    this energy inequality corresponds to the property that $E$ is nonincreasing,
    while for measure-valued solution, this is encoded in the inequality~\eqref{measeneq},
    see also Theorem~\ref{thm:main} below.
    There are other solution concepts
    that feature an energy inequality that only compares with the initial energy at time $s=0$,
    or that consider an inequality for the local energy evolution.
    This leads to different types of solutions, which are not equivalent to those considered here. 
\end{remark}

\begin{thm}\label{thm:main}
    Let $\Omega$ be a bounded Lipschitz domain. Then the following statements are equivalent:
    \begin{enumerate}[label=(\roman*)]
        \item   $(\f v ,E)$ is an energy-variational solution in the sense of Definition~\ref{def:envarIncommp}. \label{pr:envar}
        \item  $(\f v , E)$  is  a dissipative weak solution in the sense of Definition~\ref{def:disssol.incomp}.\label{pr:diss}
        \item  There exists a measure-valued solution $(\nu^o, \lambda , \nu^\infty)$ in the sense of Definition~\ref{def:meas.incomp} 
        with $ \langle \nu ^o, \xi\rangle = \f v $ and $ E = \frac{1}{2}\int_{\Omega} \langle \nu^o , |\xi|^2 \rangle \de x + \frac 1 2 \lambda_t({\ov\Omega})
        $.\label{pr:meas}
    \end{enumerate}
\end{thm}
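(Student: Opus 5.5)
We prove the cycle \ref{pr:envar}$\Rightarrow$\ref{pr:diss}$\Rightarrow$\ref{pr:meas}$\Rightarrow$\ref{pr:envar}.

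\emph{Step \ref{pr:envar}$\Rightarrow$\ref{pr:diss}.} This is the step we expect to be the main obstacle, and we handle it with Lemma~\ref{lem:equi}. First we show that $E$ is nonincreasing by testing~\eqref{relenIncomp} with $\f\varphi=0$. Next we replace $\f\varphi$ by $\alpha\f\varphi$ and let $\alpha\to\infty$ and $\alpha\to-\infty$. For a.e.\ $s<t$, the linear functional
\[
\ell(\f\varphi)=\int_\Omega\f v\cdot\f\varphi\,\dd x\Big|_s^t-\int_s^t\int_\Omega \f v\cdot\partial_t\f\varphi+\f v\otimes\f v:\nabla\f\varphi\,\dd x\,\dd\tau
\]
then satisfies $\ell(\f\varphi)\le\int_s^t 2\|(\nabla\f\varphi)_{\sym,-}\|_{L^\infty}(E-\E(\f v))\,\dd\tau$. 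A standard Lebesgue-point and localization argument turns this into a time-distributional identity. We then apply Lemma~\ref{lem:equi} with a single cone $\calC_1=\R^{d\times d}_{\sym,-}$, or with $\calC_2=\{0\}$, together with $\eta_1=2$ and $\zeta=E-\E(\f v)\ge0$. The sign is flipped so that the cone of the defect is the positive semidefinite one. This yields $\frakR\in L^\infty_{w^*}(0,T;\calM(\ov\Omega;\R^{d\times d}_{\sym,+}))$ satisfying~\eqref{measeq}. By Lemma~\ref{lem:norm}, the dual of the spectral norm is the trace norm, which on positive semidefinite matrices equals $I:\frakR$. Hence $\tfrac12\int I:\dd\frakR\le E-\E(\f v)$, which is~\eqref{diseneq}.

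Two technical points remain in this step. The space $\mathbb Y$ must be the divergence-free, tangential $C^1$ fields, which are dense in the appropriate closed subspace; we apply the lemma on that subspace. Since only symmetric gradients matter, we also need the identity $P_{\calC}(\nabla\varphi)=P_{\calC}((\nabla\varphi)_\sym)$.

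\emph{Step \ref{pr:diss}$\Rightarrow$\ref{pr:meas}.} Given $\frakR_t$, we set $\lambda_t=\tr\frakR_t=I:\frakR_t\ge0$. We take the polar density $\frakR_t=M\,\lambda_t$, where $M(x)$ is positive semidefinite with trace $1$. Its spectral decomposition $M=\sum_k\mu_k e_k\otimes e_k$ with $\sum\mu_k=1$ lets us define
\[
\nu^\infty_{x,t}=\sum_k \mu_k\tfrac12(\delta_{e_k}+\delta_{-e_k}),
\]
so that $\langle\nu^\infty,\theta\otimes\theta\rangle=M$. The eigenvectors must be chosen measurably, for instance by a measurable selection. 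For the oscillation part we take $\nu^o=\delta_{\f v}$. The surplus $E(t)-\E(\f v)-\tfrac12\lambda_t(\ov\Omega)\ge0$ is absorbed by adding $2(E-\E(\f v))-\lambda_t(\ov\Omega)$ times a fixed isotropic measure (for example normalized Lebesgue measure), placed in $\lambda$ with $\nu^\infty=$ uniform on $\calS^{d-1}$. The isotropic contribution $c\,I$ to the stress pairs with $\nabla\f\varphi$ to $c\dv\f\varphi=0$, so~\eqref{measeq2} is unaffected. The total energy then equals $E$, which is nonincreasing, and this gives~\eqref{measeneq}.

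\emph{Step \ref{pr:meas}$\Rightarrow$\ref{pr:envar}.} We set $\f v=\langle\nu^o,\xi\rangle$ and take $E$ as in the statement. By~\eqref{measeneq}, $E$ is nonincreasing and hence lies in $\BV$. Jensen's inequality gives $E\ge\E(\f v)$. We define
\[
\frakR=\big(\langle\nu^o,\xi\otimes\xi\rangle-\f v\otimes\f v\big)\calL_\Omega+\langle\nu^\infty,\theta\otimes\theta\rangle\lambda,
\]
which is positive semidefinite and has $\tfrac12\tr\frakR=E-\E(\f v)$. Then~\eqref{measeq2} becomes~\eqref{measeq}, and~\eqref{relenIncomp} follows. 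Indeed, Lemma~\ref{lemcone} and Lemma~\ref{lem:norm} give
\[
-\int\nabla\f\varphi:\dd\frakR\le\|(\nabla\f\varphi)_{\sym,-}\|_{L^\infty}\,\tr\frakR=\mathcal K(\f\varphi)(E-\E(\f v)).
\]
Combining this with $E(t)\le E(s)$ yields~\eqref{relenIncomp}, which closes the cycle.
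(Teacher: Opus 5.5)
Your cycle (i)$\Rightarrow$(ii)$\Rightarrow$(iii)$\Rightarrow$(i) is the paper's, and your steps (i)$\Rightarrow$(ii) and (iii)$\Rightarrow$(i) are essentially its arguments. The genuine difference is in (ii)$\Rightarrow$(iii).

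\emph{Step (i)$\Rightarrow$(ii).} The paper does not call Lemma~\ref{lem:equi}. It runs Hahn--Banach directly with the sublinear functional $\mathfrak p(\Phi)=\int_0^T 2\|(\Phi)_{\sym,-}\|(E-\E(\f v))\,\dd t$. It then gets positive semidefiniteness by testing with positive semidefinite $\Phi$, and \eqref{diseneq} by testing with $\Phi=-\phi(t)I_d$. Your use of Lemma~\ref{lem:equi} with one cone, $\eta_1=2$ and the spectral norm is the same argument in packaged form. It has the merit that the sharp factor $\tfrac12$ comes straight out of the spectral/trace duality, as in Sections~\ref{sec:compEul}--\ref{sec:eulerpoisson}. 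Your density concern is moot: the proof of Lemma~\ref{lem:equi} never uses density of $\mathbb Y$, so you may take $\mathbb Y$ to be the divergence-free tangential fields directly.

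\emph{Step (ii)$\Rightarrow$(iii).} The paper first adds the isotropic surplus as an absolutely continuous part $\frac{1}{|\Omega|d}[2(E-\E(\f v))-\tr\frakR(\ov\Omega)]I_d\,\dd x$. It then Lebesgue-decomposes $\tilde\frakR$, realizes the absolutely continuous density as the covariance of a Gaussian $\nu^o=\mathcal N(\f v,\frakR^o)$, and uses only the singular part for $(\lambda,\nu^\infty)$. You instead take $\nu^o=\delta_{\f v}$ and put all of $\frakR$ into the concentration pair. The surplus goes there too, carried by the uniform angle measure, whose second moment $\frac1d I_d$ is invisible to divergence-free $\f\varphi$. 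You still need to merge the two pieces into $\lambda=\tr\frakR+\frac{c}{|\Omega|}\calL_\Omega$. The matching $\nu^\infty$ is the convex combination of the two angle measures, weighted by the Radon--Nikod\'ym densities of the two summands with respect to $\lambda$.

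This is legitimate, since Definition~\ref{def:meas.incomp} does not require $\lambda$ to be singular with respect to Lebesgue measure. It is also simpler: no Lebesgue decomposition and no degenerate Gaussians. The paper's construction buys the more standard structure, with oscillations in $\nu^o$ and concentrations in a singular $\lambda$. Yours shows that every dissipative weak solution is generated by a measure-valued solution with no oscillation part at all. That sharply illustrates the non-uniqueness discussed in the remark after the corollary.

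\emph{A sign slip in (i)$\Rightarrow$(ii).} With your $\ell$, letting $\alpha\to\infty$ gives $-\ell(\f\varphi)\le\int_s^t 2\|(\nabla\f\varphi)_{\sym,-}\|_{L^\infty}(E-\E(\f v))\,\dd\tau$. Equivalently, $\ell(\f\varphi)\le\int_s^t 2\|(\nabla\f\varphi)_{\sym,+}\|_{L^\infty}(E-\E(\f v))\,\dd\tau$; this is not the inequality you display. Running the lemma on the inequality as written, with the negative semidefinite cone and then a flip, would put the defect into \eqref{measeq} with the wrong sign. Instead, apply Lemma~\ref{lem:equi} to your $\ell$ on $[0,T]$ with $\calC_1=\R^{d\times d}_{\sym,+}$; passing from $s,t$ to $0,T$ uses weak continuity, as the paper does. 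Then $\frakR=\frakR_1$ with no flip, and localizing in time recovers \eqref{measeq} for a.a.\ $s<t$.
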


\begin{proof}
The first two authors proved the equivalence of~\ref{pr:envar} and~\ref{pr:diss} already in the case of a torus in~\cite{eiterlasarzik2024envar}, but we repeat the argument here for the readers' convenience. 

We first show that \ref{pr:envar} implies \ref{pr:diss}.
    Let~$(\f v ,E)$ be an energy-variational solution in the sense of Definition~\ref{def:envarIncommp}. 
  The choice $ \f \varphi = \f 0$ implies that $ E $ is non-increasing.
 For $ \f \psi \in C^1(\O\times[0,T];\R^d)$ with $\dv\f\psi=0$, we may choose $\f \varphi =\alpha  \f \psi$ for $\alpha >0$. Multiplying the resulting inequality~\eqref{relenIncomp} by $ \alpha^{-1}$, we  infer
\begin{multline}
\left [ \frac{1}{\alpha} E - \int_{\O} 
\f v  \cdot \f \psi 
\,\dd\f x  \right ] \bigg|_{s-}^t 
+ \int_s^t \int_{\O}
\f v \cdot \partial_ t \f \psi 
+ { \f v  \otimes \f v}  : \nabla \f \psi 
\,\dd\f x 
+ \mathcal{K}(\f \psi ) \left [ \E(\f v) - E\right ] 
\dd s
\leq 0\,
\end{multline}
for  almost all $s<t \in [0,T]$ including $s=0$. In the limit $\alpha\to \infty$, and setting $s=0$, $t=T$ (which is allowed thanks to the continuity of $t\mapsto \int_{\O} 
\f v  \cdot \f \psi 
\,\dd\f x$), we find 
\begin{align}
- \int_{\O} \f v \cdot \f \psi  \,\dd\f x   \Big|_{0}^T + \int_0^T \int_{\O} \f v \cdot \partial_ t \f\psi + ( \f v \otimes \f v ) : \nabla \f \psi \,\dd\f x \dd t \leq \int_0^T \mathcal{K}(\f \psi ) [ E - \E(\f v) ]\, \dd t \label{estincomp}
\end{align}
for all $ \f \psi \in C^1(\O\times[0,T];\R^d)$ with $\dv\f\psi=0$.   
We define
\[
\begin{aligned}
&\mathcal V:= \{\nabla \f \varphi \mid \f \varphi \in C^1(\overline\O\times[0,T];\R^d) ,\, \di \f \varphi = 0 \text{ in }\O \times (0,T)
 \,,\   \f \varphi \cdot \f n  =0 \text{ on }\partial \Omega
\}\,,
\\
&\ell \colon\mathcal V\to\R,
\quad
\langle \ell, \nabla \f \psi \rangle := - \int_{\O} \f v \cdot \f \psi  \,\dd\f x   \Big|_{0}^T + \int_0^T \int_{\O} \f v \cdot \partial_ t \f\psi + ( \f v \otimes \f v ) : \nabla \f \psi \,\dd\f x \dd t\,,
\\
&\mathfrak p\colon L^1(0,T;\calC(\overline\O ; \R^{d\times d}))  \to \R,
\quad
\mathfrak p(\Phi) := \int_0^T 2\| (\Phi)_{\sym,-}\|_{\calC_0(\overline\O;\R^{d\times d})}(E - \mathcal{E}(\f v))\dd t\,
\end{aligned}
\]
Noting that both sides of~\eqref{estincomp} and the definition of $\ell$ 
are invariant under addition of a constant to $\psi$, 
we observe that~\eqref{estincomp} yields
$
\langle \ell , \Phi\rangle \leq \mathfrak p( \Phi) \,
$
for all $\Phi\in\calV$.
Since $\mathfrak p$ is a sublinear functional, 
by a combination of the Hahn--Banach theorem~\cite[Thm.~1.1]{brezis2011fa} and the Riesz representation theorem,
we infer the existence of an element
$\mathfrak R \in L^\infty_{w^*}(0,T;\mathcal{M}(\overline\O ; \R_{\sym}^{d\times d}
))$ 
with
\[
\begin{aligned}
\forall\Phi\in L^1(0,T; \calC(\overline\O; \R_{\sym}^{d\times d })): &\qquad \langle -\mathfrak R, \Phi\rangle \leq \mathfrak p(\Phi),
\\
\forall\Phi=\nabla\psi \in  \mathcal{V} : &\qquad \langle -\mathfrak R, \nabla \f \psi \rangle = \langle \ell ,\nabla  \f \psi \rangle.
\end{aligned}
\]
The first property implies $\langle \mathfrak R, \Phi\rangle \geq 0$ 
if $\Phi(x,t)$ is positive semi-definite for a.a.~$(x,t)\in\O\times (0,T)$, 
so that we have $\mathfrak R \in L^\infty_{w^*}(0,T;\mathcal{M}(\overline\O ; \R_{\sym,+}^{d\times d}))$. 
The second property yields \eqref{measeq} 
for $\f \nabla\psi\in \mathcal V$.
Considering
$\Phi(x,t)= - \phi(t)I_d$ for some $\phi\in\calC_0^{1}([0,T))$ with $\phi \geq 0$,
we further have
\[
\int_0^T \phi(t) \int_{\O} I :  \dd\frakR(t) \dd t
=\langle -\mathfrak R, \Phi\rangle 
\leq \mathfrak p(\Phi)
=2\int_0^T \phi(t) (E - \mathcal{E}(\f v))\,\dd t.
\]
Since $\phi\geq 0$ is arbitrary,  this directly implies \eqref{diseneq} 
for a.e.~$t\in(0,T)$.
In total, we see that $(\f v, E)$ is a dissipative weak solution.

To show that \ref{pr:diss} implies \ref{pr:meas},
let $(\f v, E)$ be a dissipative weak solution in the sense of Definition~\ref{def:disssol.incomp} with Reynolds defect $ \frakR $.
We alter the trace of the Reynolds defect via 
\[
\dd\tilde{\frakR}:=\dd\frakR + \frac1{|\Omega|d} \left [ 2 (E- \E(\f v)) -\tr{\frakR}(\ov\Omega)\right ]I_d \,\dd x.
\]
Then the matrix-valued measure $\tilde{\frakR}$ remains positive semi-definite
due to 
condition~\eqref{diseneq}. 
As the functions $\f\varphi$ in~\eqref{measeq} are divergence free, this equation remains valid for $\frakR$ replaced with 
$\tilde{\frakR}$. 
The Radon--Nikod\'ym--Lebesgue decomposition yields a function
$\frakR^o \in L^1(\Omega; \R^{d\times d}_{\sym,+})$ and a singular measure $\lambda \in \mathcal{M}({\ov\Omega};\R_+)$ concentrated on sets of  Lebesgue measure zero, together with $\frakR^\infty \in L^1({\ov\Omega}, \lambda ; \R^{d\times d}_{\sym,+})$ satisfying $\tr \frakR^\infty_{x,t} = 1 $, such that 
\[
\dd \tilde{\frakR} = \frakR^o\dd x  + \frakR^\infty \dd {\lambda}.
\]
Using this decomposition, we 
define the oscillation measure $ \nu^o_{x,t}$ as a probability measure on $ \R^d$ with mean $\f v(x,t)$ and covariance $\frakR^o_{x,t} $, and the concentration-angle measure $\nu^\infty_{x,t}$ as a probability measure on $ \calS^{d-1}$ with mean $\f 0$ and variance $\frakR^\infty_{x,t} $. 
There are many possible choices to achieve this. 
We may choose $ \nu^o_{x,t}\in \calP(\R^d)$ to be a Gaussian measure  $\nu^o_{x,t} =   \mathcal{N}(\f v(x,t), \frakR^o_{x,t}) $, which, in the case of a strictly positive definite covariance matrix $ \frakR^o_{x,t}$, can be written via 
\[
\dd\nu^o_{x,t} :=\frac{1}{(2 \pi)^{d/2} \det(\frakR^o_{x,t})^{1/2}} \exp\left( -\frac{1}{2} (y - \f v (x,t))^{\top} (\frakR^o_{x,t})^{-1} (y - \f v (x,t)) \right)\,\dd y.
\]
This representation degenerates in certain directions 
if  $ \frakR^o_{x,t}$ is not strictly positive definite. 
In particular,
\EEE
if $ \frakR^o_{x,t}=0$, we simply set $\nu_{x,t}^o=\delta_{v(x,t)}$.
Since the matrix $\frakR^\infty_{x,t} $ is also positive semi-definite
for a.a.~$(x,t)\in\Omega\times(0,T)$, 
we may consider its eigendecomposition such  that $$\frakR^\infty_{x,t}
  = \sum_{i=1}^d \sigma^i_{x,t}\f e^i_{x,t} \otimes \f e^i_{x,t} \,,$$ where $ \sigma^i_{x,t}\geq 0$ and $ \f e^i_{x,t}$, $i=1,\dots,d$,
  denote the eigenvalues and eigenvectors of $ \frakR^\infty_{x,t}$, respectively. We define 
\[
\nu^\infty_{x,t} := \sum_{i=1}^d \frac{\sigma^i_{x,t}}{2} \big(\delta_{\f e^i_{x,t}}+\delta_{-\f e^i_{x,t}}\big),
\]
which is a probability measure on $\calS^{d-1}$ since 
\[
\int_{\calS^{d-1}}\,\dd \nu^\infty_{x,t}  =\sum_{i=1}^d \sigma^i_{x,t} = \tr {\frakR^\infty_{x,t}}  = 1 \,.
\]
By definition, the expectation of $\nu^\infty_{x,t}$ vanishes and 
its second moment is given by $\frakR^\infty_{x,t} $. 
In particular, this implies 
\[
    \dd \tilde{\frakR}_{x,t} = \frakR^o_{x,t}\dd x  + \frakR^\infty_{x,t}\dd \lambda= \langle \nu^o_{x,t}, ( \xi - \f v (x,t))\otimes ( \xi - \f v (x,t))\rangle \,\dd x + \langle \nu^\infty_{x,t}, \theta \otimes \theta \rangle \,\dd \lambda \,,
\]
where 
\begin{align*}
    \langle \nu^o
    , ( \xi - \f v 
    )\otimes ( \xi - \f v
    )\rangle 
     &= 
    \langle \nu^o
    ,  \xi \otimes  \xi \rangle  - \langle \nu^o
    , \xi \rangle \otimes \f v 
    - \f v 
    \otimes \langle \nu^o
    , \xi \rangle + \langle \nu^o
    , 1 \rangle \f v 
    \otimes \f v
    \\&
     = 
    \langle \nu^o
    ,  \xi \otimes  \xi \rangle  - \f v 
    \otimes \f v
    \,.
\end{align*}
With these relations, we can rewrite~\eqref{measeq} as 
\begin{align}\label{measeqeq}
\begin{split}
        - \int_\Omega \langle \nu ^o, \xi \rangle \cdot \f \psi \,\dd x \Big |_s^t + \int_s^t\int_{\Omega}\langle \nu^o, \xi\rangle  \cdot \partial_t \f \psi  +  \langle \nu^o , \xi \otimes \xi\rangle :  \nabla \f \psi\,\dd  \f x\dd  \tau  \\ + \int_s^t\!\!\int_{{\ov\Omega}}   \nabla \f \psi: \langle \nu^\infty, \theta\otimes \theta \rangle \,\dd \lambda \dd  \tau &= 0\,, \end{split}  
\end{align}
for all $ \f \psi \in C^1({\ov\Omega}\times[0,T];\R^d )$ with $\dv\f\psi=0$, which is equivalent to~\eqref{measeq2}. 
Moreover, from the definition of~$\tilde{\frakR}$, we get
\begin{align*}
   2[ E(t)- \E( \f v(t))  ] &=  \int_{\ov\Omega} I: \dd \tilde{\frakR}_t = \int_{\Omega} I : \frakR^o_{x,t}\,\dd x + \int_{\ov\Omega} I : \frakR^\infty_{x,t}\,\dd \lambda_t(x) \\
   &= \int_{\Omega} \langle \nu^o_{x,t} , |\xi-\f v(x,t)|^2 \rangle \,\dd x + \int_{\overline\Omega}\,\dd\lambda_t\\
   & = 
 \int_\Omega \langle \nu^o_{x,t}, |\xi|^2 \rangle \dd x - 2\E( \f v) + \lambda_t({\ov\Omega})\,,
\end{align*}
which yields the claimed identity for $E$.
Now~\eqref{measeneq} follows from monotonicity of $E$.

Finally, we show that \ref{pr:meas} implies \ref{pr:envar}.
Let $ ( \nu^o, \lambda, \nu^\infty)$ be a measure-valued solution in the sense of Definition~\ref{def:meas.incomp}. We define 
\[
    \f v (x,t):= \langle \nu^o_{x,t} , \xi \rangle \quad \text {and} \quad E (t) : =  \frac{1}{2}\int_\Omega \langle \nu^o_{x,t}, |\xi|^2 \rangle \dd x + \frac{1}{2}\lambda_t(\overline\Omega).  
\]
By Jensen's inequality, we find 
\[
\begin{aligned}
    \E( \f v) &= \frac{1}{2}\int_\Omega | \f v |^2 \,\dd x = \frac{1}{2}\int_\Omega | \langle \nu^o_{x,t} , \xi \rangle |^2 \,\dd x \leq \frac{1}{2}\int_\Omega  \langle \nu^o_{x,t} , |\xi|^2 \rangle   \,\dd x 
    \\ &\leq  \frac{1}{2}\int_\Omega \langle \nu^o_{x,t}, |\xi|^2 \rangle \,\dd x + \frac{1}{2}\lambda_t({\ov\Omega})  = E(t)\,,
\end{aligned}
\]
where we used that $\lambda_t$ is a nonnegative measure.
Moreover, together with~\eqref{eq:measdiv},
this implies $\f v \in L^\infty(0,T;L^2_{\sigma}(\Omega))$. 
To obtain~\eqref{relenIncomp}, let $s<t$ 
and choose $\varphi = \phi^n \f \psi $ in~\eqref{measeq2},
where $ \{ \phi^n\} \subset \calC_c^\infty [0,T)$ approximates the indicator function $ \chi_{[s,t]}$, and $\f \psi \in C^1 ( {\ov\Omega} \times [0,T]; \R^d)$ with $\dv\f\psi=0$, we find that~\eqref{measeq2} implies~\eqref{measeqeq}.
By the definition of $\f v $, we obtain
\begin{align}\label{eq:meastoenvar}
\begin{split}
     - &\int_\Omega \f v  \cdot \f \psi \,\dd x \Big |_s^t + \int_s^t\int_{\Omega}\f v  \cdot \partial_t \f \psi  +  \f v \otimes \f v  :  \nabla \f \psi\,\dd  \f x\dd  \tau  \\ &+ \int_s^t\!\!\int_{\Omega}   \nabla \f \psi: \big(\langle \nu^o , \xi \otimes \xi\rangle - \f v \otimes \f v \big) \,\dd x + \int_{{\ov\Omega}}   \nabla \f \psi:  \langle \nu^\infty, \theta\otimes \theta \rangle \,\dd \lambda \dd  \tau = 0\,
\end{split}
\end{align}
for all $ \f \psi \in C^1({\ov\Omega}\times[0,T];\R^d )$ with $\dv\f\psi=0$.
To estimate the second line from below,
we use the elementary identity
$ \f A : \f B = \f A : (\f B)_{\sym}$ for all $ \f A \in \R^{d\times d}_{\sym}$, $\f B\in\R^{d\times d}$
and the inequality $ \f A : \f B \geq  (\f A)_- : \f B$ for all $ \f A \in \R^{d\times d}_{\sym}$, $\f B\in\R^{d\times d}_{\sym,+}$. 
We observe 
\begin{align*}
    &\int_{\Omega}   \nabla \f \psi: \left(\langle \nu^o , \xi \otimes \xi\rangle - \f v \otimes \f v \right) \dd x + \int_{\Omega}   \nabla \f \psi:  \langle \nu^\infty, \theta\otimes \theta \rangle\, \dd \lambda
    \\
    &=     \int_{\Omega}   (\nabla \f \psi)_{\sym}: \left(\langle \nu^o , \xi \otimes \xi\rangle - \f v \otimes \f v \right) \dd x + \int_{{\ov\Omega}}    (\nabla \f \psi)_{\sym}:  \langle \nu^\infty, \theta\otimes \theta \rangle \,\dd \lambda
    \\
    &\geq \int_{\Omega}   (\nabla \f \psi)_{\sym,-}: \left(\langle \nu^o , \xi \otimes \xi\rangle - \f v \otimes \f v \right) \dd x + \int_{{\ov\Omega}}    (\nabla \f \psi)_{\sym,-}:  \langle \nu^\infty, \theta\otimes \theta \rangle\, \dd \lambda
    \\
    &\geq -\int_{\Omega}   \big|(\nabla \f \psi)_{\sym,-}\big| \tr\left(\langle \nu^o , \xi \otimes \xi\rangle - \f v \otimes \f v \right) \dd x- \int_{{\ov\Omega}}    |(\nabla \f \psi)_{\sym,-}|\tr(  \langle \nu^\infty, \theta\otimes \theta \rangle)\,  \dd \lambda 
    \\
    & \geq - 2 \|(\nabla \f \psi)_{\sym,-} \|_{\infty} \frac{1}{2} \left(\int_{\Omega}    \langle \nu^o , |\xi|^2\rangle \dd x - \int_\Omega|\f v|^2  \dd x  
    +\int_{{\ov\Omega}}    \langle \nu^\infty, 1\rangle\,  \dd \lambda \right)
    \\
    & = 2 \|(\nabla \f \psi)_{\sym,-} \|_{\infty} \left[ \E( \f v) - E\right]\,,
\end{align*}
For the second inequality, we use the duality between the Frobenius norm and the trace norm on the symmetric matrices,
compare Lemma~\ref{lem:norm}. 
The last equality is due 
$\nu^\infty_{x,t}\in\calP(\ov\Omega)$ and the definition of $E$.
Combining this inequality with~\eqref{eq:meastoenvar} and adding $ E \big|_s^t\leq 0$, which holds due to~\eqref{measeneq}, 
we obtain~\eqref{relenIncomp}
with $\mathcal K$ given by~\eqref{eq:regweight.incomp}.
Therefore, $ ( \f v ,E)$ is an energy-variational solution in the sense of Definition~\ref{def:envarIncommp}.
\end{proof}

We make the following observation, which is novel on bounded domains (on $\R^3$, the existence of measure-valued solutions was already shown in~\cite{DiPernaMajda}): 
\begin{cor}[Existence of measure-valued solutions]
    Let $\Omega\subset\R^d$ be a bounded domain with Lipschitz boundary. Then, for every $v_0\in L^2_\sigma(\Omega)$ and $T\in(0,\infty]$, there exists a measure-valued solution of the incompressible Euler system in the sense of Definition~\ref{def:meas.incomp}.
\end{cor}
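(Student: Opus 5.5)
The plan is to combine Theorem~\ref{thm:main} with an existence result for energy-variational solutions (or, equivalently, dissipative weak solutions). The existence of energy-variational solutions for hyperbolic conservation laws is established in~\cite{eiterlasarzik2024envar} via a time-discrete minimizing-movement scheme. There it is carried out on the torus, but the argument only uses the convex energy $\E$, the weak formulation tested with solenoidal fields, and the regularity weight $\mathcal K$ from~\eqref{eq:regweight.incomp}. I would therefore first verify that this existence theorem transfers to a bounded Lipschitz domain with the test space of $C^1$ divergence-free fields satisfying $\f\varphi\cdot\f n=0$.

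For this transfer, the key checks are as follows. The functional $\f v\mapsto \int_\Omega \f v\otimes\f v:\nabla\f\varphi\,\dd x + \mathcal K(\f\varphi)\E(\f v)$ must be convex and weakly* lower semicontinuous on $L^2_\sigma(\Omega)$. This holds because
\[
\f v\otimes\f v:(\nabla\f\varphi)_{\sym} + 2\|(\nabla\f\varphi)_{\sym,-}\|_\infty \tfrac12|\f v|^2
\]
is a nonnegative quadratic form in $\f v$ pointwise. In addition, the initial datum $\f v_0\in L^2_\sigma(\Omega)$ must be admissible, with $E(0)=\E(\f v_0)$. None of these ingredients uses periodicity: the boundary condition is encoded entirely in the function spaces. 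If desired, one can alternatively argue by a Galerkin scheme on $L^2_\sigma(\Omega)$ with a basis of smooth solenoidal fields (eigenfunctions of the Stokes operator). The a priori bound $\E(\f v_n(t))\le\E(\f v_0)$ gives weak* compactness. The nonlinear term is then handled exactly as in the implication \ref{pr:meas}$\Rightarrow$\ref{pr:envar}: its weak* limit is replaced by $\f v\otimes\f v$ plus a positive semidefinite defect, which is controlled by $E-\E(\f v)$ through the weight $\mathcal K$.

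Once an energy-variational solution $(\f v,E)$ with $\f v(0)=\f v_0$ exists on $(0,T)$, Theorem~\ref{thm:main} (implications \ref{pr:envar}$\Rightarrow$\ref{pr:diss}$\Rightarrow$\ref{pr:meas}) produces a measure-valued solution $(\nu^o,\lambda,\nu^\infty)$ with $\langle\nu^o,\xi\rangle=\f v$. In particular, the initial value is attained because $\langle\nu^o_{x,0},\xi\rangle=\f v_0$. For $T=\infty$, I would solve on each $(0,N)$, or run the scheme directly on $[0,\infty)$ since all bounds are uniform in time, and use a diagonal argument or consistency of the construction to obtain a global solution.

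The main obstacle is ensuring that the cited existence result genuinely applies to the bounded-domain setting with the slip boundary condition; the implication chain in Theorem~\ref{thm:main} is then automatic. In particular, I expect the delicate point to be checking that the limit passage in the scheme only needs test functions with $\f\varphi\cdot\f n=0$. It must also be confirmed that the defect measures live on $\ov\Omega$ rather than $\Omega$, which Definition~\ref{def:meas.incomp} already allows.
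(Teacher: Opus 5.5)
Your argument is the same as the paper's, which likewise just combines the existence of energy-variational solutions on bounded Lipschitz domains (obtained as in \cite{eiterlasarzik2024envar, Marcel}) with the implications \ref{pr:envar}$\Rightarrow$\ref{pr:diss}$\Rightarrow$\ref{pr:meas} of Theorem~\ref{thm:main}. One caveat concerns your optional Galerkin aside: Stokes eigenfunctions vanish on $\partial\Omega$, so their span is not $C^1$-dense among solenoidal fields with merely $\f\varphi\cdot\f n=0$; since $\f v_n\otimes\f v_n$ is only bounded in $L^1$, the limit passage in~\eqref{measeq2} then cannot reach test fields with nonzero tangential trace (the same boundary issue the paper flags for the Navier--Stokes approximation), so you would need a basis that is $C^1$-dense in that test space instead.
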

\begin{proof}
    This follows from Theorem~\ref{thm:main} and the existence of energy-variational solutions, which can be shown as in~\cite{eiterlasarzik2024envar, Marcel}.
\end{proof}
While there might be more direct proofs that avoid the detour via energy-variational solutions, we note that the standard construction as a limit of solutions to the Navier--Stokes equations with Dirichlet boundary condition would not work in a bounded domain due to the possible formation and detachment of a boundary layer. We also remark that similar existence statements follow for all the examples discussed in this paper (compressible Euler, Euler--Poisson, Euler--Korteweg). 

\begin{remark}
We note that there are possibly uncountably many different measure-valued solutions we could define for one particular dissipative weak solution. Indeed, as the measure-valued solution only prescribes the expectation and the variance of the oscillation measure $\nu^o$ and the concentration-angle measure $\nu^\infty$, there are multiple ways to construct these,
while giving rise to the same dissipative weak solution as they have the same  expectation and variance. Nevertheless, the energy inequality turns out strong enough to exclude such non-uniqueness in case there is a strong solution emanating from the given data~\cite{weakstrongeuler}. This is the \emph{weak-strong uniqueness principle} that holds for measure-valued solutions of many fluid systems~\cite{wiedemannsurvey}. 
\end{remark}

\section{Isentropic Euler equations in a bounded domain}\label{sec:compEul}

In this section, we consider the compressible isentropic Euler equations in a bounded domain $\Omega \subset \R^d$ with Lipschitz boundary, which is given by 
\begin{subequations}\label{eq:isenEul}
    \begin{align}
\partial_t \rho + \text{div} \; m &=0 \quad &&\text{in}\; \Omega \times (0,T) ,\\
   \partial_t m + \text{div}\left( \frac{m \otimes  m}{\rho}\right) + \nabla( \rho^\gamma)&= 0\quad &&\text{in} \;\Omega \times (0,T),  \\
   (\rho(0),m(0))&=(\rho_0,m_0) \quad &&\text{in} \;\Omega,
\end{align}
with an adiabatic exponent $\gamma > 1$. The system is equipped with an impermeability boundary condition, that is,
\begin{align}
  m \cdot n&=0{} \qquad \text{on } \partial \Omega\times(0,T).
\end{align}\end{subequations}

In the case of periodic boundary conditions, 
that is, 
for the compressible Euler equations posed on the torus,
the existence of energy-variational solutions 
and their equivalence to dissipative weak solutions
was already shown in~\cite[Sec.~5]{eiterlasarzik2024envar}.
An analogous existence result on the multidimensional whole space
with nontrivial spatial limits 
was recently derived in~\cite{Schindler}. 

To introduce energy-variational solutions in the present case, 
we define the energy $\E:L^{\gamma}(\Omega) \times L^{\frac{2\gamma}{\gamma+1}}(\Omega;\R^d) \rightarrow [0,\infty]$ by
$$\mathcal{E}(\rho,m)=\int_\Omega \eta(\rho(x),m(x))\,\dd{x}$$ 
for the convex function $\eta: \mathbb{R}^{d+1}\rightarrow [0, \infty]$ given by
\begin{align}\label{eq:defeta}
    \eta(\rho, m) = \begin{cases}
    \frac{|m|^2}{2\rho}+\frac{\rho^\gamma}{\gamma-1} \quad &\text{if} \quad \rho>0, \\
    0 \quad &\text{if}\quad (\rho,m)=(0,0),\\
    \infty \quad &\text{else}.
       \end{cases}
\end{align}
As in~\cite{eiterlasarzik2024envar,Schindler},
a suitable regularity weight $\mathcal{K}:C^1({\ov\Omega} ;\R^d)\times \mathcal{V} \to [0,\infty)$ is given by an effective regularity weight
$\tilde{\mathcal{K} }:\mathcal{V} \to [0,\infty)$ via 
\begin{align}\label{eq:regweight.Euler.compr}
      \mathcal{K}(\psi,\varphi)=\tilde{\mathcal{K} } (\varphi) =\max\{ 2\|(\nabla \varphi)_{\mathrm{sym},-}\|_{L^\infty(\Omega;\R^{d\times d})},(\gamma-1)\|(\di\varphi)_{-}\|_{L^\infty(\Omega)}\}\,,
\end{align}
where $\mathcal V$ is given by $\mathcal{V}=\{\varphi \in C^1(\overline{\Omega};\mathbb{R}^d)\mid n\,\cdot \varphi=0\text{ on } \partial\Omega\}$. 

\begin{dfn}\label{def:envarEuler}
A triple 
\[
(\rho,m,E)\in L^\infty(0,T; L^\gamma(\Omega)) \times L^\infty(0,T;L^\frac{2\gamma}{\gamma +1}(\Omega;\R^d)) \times \mathrm{BV}([0,T])
\]
is called an \textit{energy-variational solution} to the isentropic Euler system~\eqref{eq:isenEul} if it holds $\mathcal{E}(\rho(t),m(t)) \leq E(t) $ for a.e.~$t \in (0,T)$ and  if the energy-variational inequality
\begin{align}\label{eq:envarEul}
\begin{split}
    \left[E- \int_\Omega \rho  \psi + m \cdot\,\varphi \,\dd{x} \right]\bigg|_s^t &+\int_s^t \int_\Omega \rho \, \partial_t \psi+ m \cdot\nabla \psi  \,\dd{x} \dd{\tau}\\&+ \int_s^t \int_\Omega m \cdot \,\partial_t \varphi + \left(\frac{m \otimes m}{\rho}+ \rho^\gamma {I}_d\right):\nabla \varphi \,\dd{x} \dd{\tau} \\
     &+ \int_s^t \tilde{\mathcal{K} }(\varphi)(\mathcal{E}(\rho,m)-E) \,\dd{\tau}\leq 0\end{split}
\end{align}
holds for all test functions $(\psi, \varphi) \in C^1([0,T];C^1({\ov\Omega}) \times \mathcal{V}) $  and for a.a.~$s,t\in (0,T)$, $s<t$, including $s=0$ with $(\rho(0), m(0)) = (\rho_0,m_0)$.

\end{dfn}

\begin{dfn}\label{def:dissweakEuler}
A triple 
\[
(\rho,m,E)\in L^\infty(0,T; L^\gamma(\Omega)) \times L^\infty(0,T;L^\frac{2\gamma}{\gamma +1}(\Omega;\R^d)) \times \mathrm{BV}([0,T])
\]
is called a \textit{dissipative weak solution}  to the isentropic Euler system~\eqref{eq:isenEul} if $ E $ is nonincreasing and there exists a Reynolds stress $\frakR_1 +\frakR_2I_d   $ consisting of an advective part $\frakR_1 \in L^\infty_{w^*} (0,T;\mathcal{M}( {\ov\Omega} ; \R^{d\times d}_{\sym,+}))$ and a pressure part
$\frakR_2\in L^\infty_{w^*} (0,T;\mathcal{M}( {\ov\Omega} ; \R_+)) $ such that 
\begin{subequations}
    \label{eq:dissweakEul}
\begin{align}
   - \int_\Omega \rho  \psi \,\dd x\bigg|_s^t +\int_s^t \int_\Omega \rho \, \partial_t \psi+ m \cdot\nabla \psi\,\dd{x} \,\dd{\tau} &= 0,
   \\
   \begin{split}
 - \int_\Omega m \cdot \varphi\,\dd x \bigg|_s^t   + \int_s^t \int_\Omega m \cdot \,\partial_t \varphi + \left(\frac{m \otimes m}{\rho}+ \rho^\gamma {I}_d\right):\nabla \varphi \,\dd{x}\,\dd \tau
 \\ +\int_0^T \Big(\int_{{{\ov\Omega}}} (\nabla \varphi)_{\sym} :\dd \frakR_1 + \int_{{{\ov\Omega}}} (\di \varphi) \,\dd \frakR_2\Big)\dd{\tau}   &= 0 ,
 \end{split}
\end{align}
holds for all test functions $(\psi, \varphi) \in C^1([0,T];C^1({\ov\Omega}) \times \mathcal{V}) $  and for a.a.~$s,t\in (0,T)$, $s<t$, including $s=0$ with $(\rho(0), m(0))= (\rho_0,m_0)$,
and if
\begin{equation}
    \frac{1}{2} \int_{{{\ov\Omega}}} \,\dd \tr ( \frakR_1)+ \frac{1}{\gamma-1} 
    \int_{{\ov\Omega}}\de \frakR_2 \leq E -  \mathcal{E}(\rho,m)
\end{equation}
\end{subequations}
holds a.e.~in $(0,T)$.
\end{dfn}

\begin{remark}[Existence of generalized solutions]
    In the case of periodic boundary conditions,
    the existence of energy-variational solutions was shown as an example of a general existence result~\cite{eiterlasarzik2024envar}.
    It was further generalized in~\cite{Marcel},
    which allows to also include the present case of a bounded domain.
    The existence of dissipative weak solutions was shown in~\cite[Prop.~1]{Westdickenberg23}. These works also investigate different selection criteria. 
\end{remark}

We show that both solution concepts are equivalent.

\begin{thm}\label{thm:EulerComp}
    Let $\Omega\subset\R^d$ be a bounded Lipschitz domain and $\gamma>1$. Then 
         $(\rho, \f m ,E)$ is an energy-variational solution in the sense of Definition~\ref{def:envarEuler}
         if and only if
        $(\rho, \f m , E)$ is  a dissipative weak solution in the sense of Definition~\ref{def:dissweakEuler}.
\end{thm}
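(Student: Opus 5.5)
The plan is to follow the scheme of the proof of Theorem~\ref{thm:main}, with Lemma~\ref{lem:equi} as the central tool. Take the cones $\calC_1=\R^{d\times d}_{\sym,-}$, acting on $(\nabla\varphi)_{\sym}$, and $\calC_2=(-\infty,0]\cdot I_d$, acting on $\di\varphi$. Set $\eta_1=2$ and $\eta_2=\gamma-1$, and take $\zeta=E-\mathcal E(\rho,m)\geq 0$. The signs are chosen so that the positive semidefinite defects are paired with the negative parts of the test gradients.

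\emph{Dissipative weak $\Rightarrow$ energy-variational.} Subtract the two weak equations from the monotonicity inequality $E|_s^t\le 0$. It then remains to bound the defect term from below:
\[
\int_{\ov\Omega}(\nabla\varphi)_{\sym}:\dd\frakR_1+\int_{\ov\Omega}\di\varphi\,\dd\frakR_2\geq -\tilde{\mathcal K}(\varphi)\,(E-\mathcal E(\rho,m)).
\]
For the first term, use $A:B\ge (A)_{\sym,-}:B\ge -|(A)_{\sym,-}|_2\,\tr B$ for positive semidefinite $B$, which is the duality of Lemma~\ref{lem:norm}. For the second term, use $\di\varphi\ge-(\di\varphi)_-$. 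These give the lower bound $-2\|(\nabla\varphi)_{\sym,-}\|_\infty\cdot\tfrac12\tr\frakR_1(\ov\Omega)-(\gamma-1)\|(\di\varphi)_-\|_\infty\cdot\tfrac1{\gamma-1}\frakR_2(\ov\Omega)$. This is at least $-\tilde{\mathcal K}(\varphi)\big(\tfrac12\tr\frakR_1+\tfrac1{\gamma-1}\frakR_2\big)(\ov\Omega)$, and the energy bound in Definition~\ref{def:dissweakEuler} finishes this direction. One must also localize the weak formulation to $[s,t]$ by approximating the indicator $\chi_{[s,t]}$, exactly as in the proof of Theorem~\ref{thm:main}.

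\emph{Energy-variational $\Rightarrow$ dissipative weak.} First, testing with $(\psi,\varphi)=(0,0)$ shows that $E$ is nonincreasing. Next, test with $(\alpha\psi,0)$ and with $(-\alpha\psi,0)$, divide by $\alpha$ and let $\alpha\to\infty$. Since $\tilde{\mathcal K}$ does not depend on $\psi$, this yields the continuity equation as an equality. Testing with $(0,\alpha\varphi)$ and letting $\alpha\to\infty$ gives $\langle\ell,\varphi\rangle\le\int_0^T\tilde{\mathcal K}(\varphi)\zeta\,\dd t$, where $\ell$ is the momentum residual. Here one uses weak continuity in time to pass to $s=0$, $t=T$, and restricts to test functions vanishing at $T$. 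Now identify $P_{\calC_1}(\nabla\varphi)=(\nabla\varphi)_{\sym,-}$ in spectral norm, and let the $\calC_2$-component be the projection of the trace part, namely $-(\di\varphi)_-$. Lemma~\ref{lem:equi} then produces $\frakR_1,\frakR_2$ valued in $\calC_1,\calC_2$ with total weighted norm at most $\zeta$. Flipping signs to obtain positive semidefinite and nonnegative measures, the dual norms of Lemma~\ref{lem:norm} turn $\frac12\|\frakR_1\|_{\mathrm{tr}}+\frac1{\gamma-1}\|\frakR_2\|$ into exactly the energy bound of Definition~\ref{def:dissweakEuler}.

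\emph{Main obstacle.} The hardest step is fitting the problem into Lemma~\ref{lem:equi}. The lemma is phrased with $\nabla\varphi$ projected onto spans $\calL_i\subset\R^{d\times d}$, whereas here the two defects pair with $(\nabla\varphi)_{\sym}$ and with $\di\varphi=\tr\nabla\varphi$. These are not orthogonal components: the trace part overlaps with the symmetric part.

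My first attempt is to rerun the Hahn--Banach argument directly on $X=C(\ov\Omega;\R^{d\times d}_{\sym})\times C(\ov\Omega)$ with the embedding $\varphi\mapsto((\nabla\varphi)_{\sym},\di\varphi)$. I would use the norm $\max\{2\|\cdot\|_{2},(\gamma-1)\|\cdot\|\}$ and the sublinear functional $\max\{2\|(\Phi)_{-}\|_\infty,(\gamma-1)\|(f)_-\|_\infty\}\zeta$ in place of the projections. Well-definedness on the range follows as in the proof of Lemma~\ref{lem:equi}. The sign of each defect is then read off by testing with $(A,0)$ and $(0,f)$ for $A\ge0$, $f\ge0$, using Lemma~\ref{lem:duality.conevalued}. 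Finally, the dual norm of the max-norm is the weighted sum, which gives exactly the constants $\tfrac12$ and $\tfrac1{\gamma-1}$.
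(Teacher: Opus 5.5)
Your proposal is correct and follows the paper's route. The paper applies Lemma~\ref{lem:equi} with the same two cones, taken as $\R^{d\times d}_{\sym,+}$ and $\R_+ I_d$ and paired with $\tilde{\mathcal K}(-\varphi)$ instead of flipping signs afterwards, keeps $\psi$ inside $\ell$ rather than deriving the continuity equation separately, uses weights $2$ and $\gamma-1$ with $\zeta=E-\mathcal E(\rho,m)$, and reads off the energy bound from the dual norms of Lemma~\ref{lem:norm}; the non-orthogonality you flag is not an obstacle, because the product space $X=C(\ov\Omega;\calL_1)\times C(\ov\Omega;\calL_2)$ in that lemma's proof already handles overlapping spans (the paper uses it with $\calC_2\subset\calC_1$), so your direct Hahn--Banach rerun is just that proof specialized.
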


\begin{proof}
    In order to prove the equivalence theorem, we want to Lemma~\ref{lem:equi}. 
    To this end, we consider the space $\mathbb Y=C^1([0,T];C^1({\ov\Omega})\times \mathcal{V})$  and  the functional $\ell : \mathbb Y \to \R$ given by 
    \begin{align*}
        \langle\ell, (\psi,\varphi)\rangle := {}& \left[ \int_\Omega \rho  \psi + m \cdot\,\varphi \,\dd{x} \right]\bigg|_0^T -\int_0^T \int_\Omega \rho \, \partial_t \psi + m \cdot\nabla \psi \, \dd x \dd t \\&-\int_0^T \int_{\Omega} m \cdot \,\partial_t \varphi + \left(\frac{m \otimes m}{\rho}+ \rho^\gamma {I}_d\right):\nabla \varphi \,\dd{x} \dd t %
 \,.
    \end{align*}
We choose the cones $\calC_1:=\{ z = (0, A)^T \in \R^{(1+d)\times d}\mid A \in \R^{d \times d}_{\sym,+}\}  $
and $\calC_2:= \{ z = (0, \alpha I_d)^T\in \R^{(1+d)\times d}\mid \alpha \in \R_+ \}$. These give rise to the linear spaces $\calL_1=\spa (\calC_1)$ and $\calL_2=\spa(\calC_2)$.
We follow the natural choices and equip 
the space $\calL_1$ with the spectral norm, \textit{i.e.,} 
        $ \|z\|_{\calL_1} := | A |_{2}$,
         and $\calL_2$ with the norm $ \| z \|_{\calL_2} := |\alpha|$.

Via a classical variational argument,
we observe that~\eqref{eq:envarEul} holds if and only if  $E\big|_s^t\leq 0$ for a.e.~$0\leq s<t\leq T$ 
and
$$
\begin{aligned}
    \langle \ell , (\psi,\varphi) \rangle &\leq \int_0^T \tilde{\mathcal{K} }(-\varphi) [E- \mathcal{E}(\rho , m)]\,\de t
\\
& = \int_0^T \max\{ 2\|(\nabla \varphi)_{\mathrm{sym},+}\|_{L^\infty(\Omega;\R^{d\times d})},(\gamma{-}1)\|(\di\varphi)_{+}\|_{L^\infty(\Omega)}\}[E- \mathcal{E}(\rho , m)] \,\dd t \,
\end{aligned}
$$
for all $\varphi \in \mathbb Y$. 
According to Lemma~\ref{lem:equi}, this inequality is equivalent to 
 the existence of $\tilde\frakR_1\in L^\infty_{w^*}(0,T;\mathcal{M}({\ov\Omega};\calC_1))$ and $\tilde\frakR_2\in L^\infty_{w^*}(0,T;\mathcal{M}({\ov\Omega};\calC_2))$ such that
        \begin{align}\label{eq:normfrakRK}
        \begin{split}
            \langle\ell , (\psi,\varphi)\rangle =\int_0^T \Big(\int_{{\ov\Omega}}
            (\nabla \psi, \nabla \varphi)^T : \dd \tilde\frakR_1 + \int_{{\ov\Omega}}
            (\nabla \psi, \nabla \varphi )^T: \dd\tilde\frakR_2\Big)\, \dd t
            &\quad\text{for all }\varphi\in\mathbb Y,
            \\
            \frac{1}{2}\| \tilde\frakR_1\|_{\mathcal{M}({\ov\Omega};\calL_1^*)} + \frac{1}{\gamma-1}\| \tilde\frakR_2\|_{\calM({\ov\Omega};\calL_2^*)} \leq \zeta
            &\quad\text{a.e.~in $(0,T)$} \,,
            \end{split}
        \end{align}
        where 
        $ \zeta= E-\E(\rho,m)$.  
        From the definition of the cones~$\calC_1$ and $\calC_2$, we infer that there exist two measures $ \frakR_1 \in L^\infty_{w^*}(0,T;\calM({\ov\Omega};\R^{d\times d}_{\sym,+}))$ and $\frakR_2 \in L^\infty_{w^*}(0,T;\calM({\ov\Omega}; \R_+))$ such that $\tilde\frakR_1$ and $\tilde\frakR_2$ can be represented as 
$$
\tilde\frakR_1:= 
   ( \f 0 ,
    \frakR_1 )^T
\, , \qquad \tilde\frakR_2:= 
  (  \f 0,
    \frakR_2 I )^T,
$$
that is, 
$$
\int_{{\ov\Omega}}  (\nabla \psi, \nabla \varphi)^T : \dd \tilde\frakR_1 = \int_{\ov\Omega} (\nabla \varphi)_{\sym} : \de \frakR_1 \,, \quad \int_{{\ov\Omega}}  (\nabla \psi, \nabla \varphi)^T : \dd \tilde\frakR_2 = \int_{\ov\Omega} \dv \varphi  \,\de \frakR_2 \,,
$$
        which allows to rewrite the first line in~\eqref{eq:normfrakRK}.
        To represent the second line in terms of $\frakR_1$ and $\frakR_2$, we have to take the total variation measure with respect to the correct dual norms.
        By Lemma~\ref{lem:norm}, 
        the norm on $\calL_1^*$ originates from the trace norm, which yields
        (as $\frakR$ is positive semi-definite)
        that 
        $ \| \tilde\frakR_1\|_{\mathcal{M}({\ov\Omega};\calL_1^*)} = \int_{{\ov\Omega}} \de |\frakR_1|_{\mathrm{tr}} = \int_{{\ov\Omega}} \de \tr (\frakR_1)$. Furthermore, the dual norm relevant for the total variation measure of $\frakR_2$ is the standard one for real-valued measures, based on the Euclidean norm.
        As $\frakR_2$ is a nonnegative measure,
        this yields the identity $\| \tilde\frakR_2\|_{\calM({\ov\Omega};\calL_2^*)}  = \int_{{\ov\Omega}} \de \frakR_2$. 
        Rewriting~\eqref{eq:normfrakRK} in terms of 
        $\frakR_1$ and $\frakR_2$,
        we thus obtain the equivalence of
        the formulations~\eqref{eq:envarEul} and~\eqref{eq:dissweakEul},
        which finishes the proof.        
\end{proof}

\section{The Euler--Korteweg system}
\label{sec:eulerkorteweg}

For $\Omega\subset\R^d$ a bounded Lipschitz domain 
and $T>0$, we consider the the Euler--Korteweg system 
given by
\begin{subequations}\label{au}
    \begin{align}
\partial_t \rho + \text{div} \; m &=0 \quad &&\text{in}\; \Omega \times (0,T) ,\\
   \partial_t m + \text{div}\left( \frac{m \otimes  m}{\rho}\right) + \nabla( \rho^\gamma)&=\rho \nabla\Delta \rho \quad &&\text{in} \;\Omega \times (0,T),  \\
   (\rho(0),m(0))&=(\rho_0,m_0) \quad &&\text{in} \;\Omega,
\end{align}
with adiabatic exponent $\gamma>1$, where for simplicity we have chosen the capillary coefficient to be 1.
 The system is equipped with impermeability boundary conditions for the momentum and homogeneous Neumann boundary conditions for the density,
\begin{align}
  m \cdot n&=0{} = \nabla\rho\cdot n \qquad \text{on } \partial \Omega\times(0,T).
\end{align}\end{subequations}

The total energy $\E:(H^1(\Omega) \cap L^{\gamma}(\Omega)) \times L^{\frac{2\gamma}{\gamma+1}}(\Omega;\R^d) \rightarrow [0,\infty]$ is defined as
$$\mathcal{E}(\rho,m)=\int_\Omega \eta(\rho(x),m(x))+\frac{|\nabla \rho(x)|^2}{2}\,\dd{x},$$ 
where the potential $\eta$ 
is given by~\eqref{eq:defeta}.
We set $\mathcal{V}=\{\varphi \in C^2(\overline{\Omega};\mathbb{R}^d)\mid n\,\cdot \varphi=0\text{ on } \partial\Omega\}$
and consider the regularity weight $\mathcal{K}:C^1({\ov\Omega} ;\R^d)\times \mathcal{V} \to [0,\infty)$ given by $\mathcal{K}(\psi,\varphi):= \tilde{\mathcal{K} }(\varphi)$
for the effective regularity weight $\tilde{\mathcal{K} }: \mathcal{V} \to [0,\infty)$
with
\begin{align}
      \tilde{\mathcal{K} }(\varphi):=\max\{ 2\|(\nabla \varphi)_{\mathrm{sym},-}\|_{L^\infty(\Omega;\R^{d\times d})} +\|(\di\varphi)_{-}\|_{L^\infty(\Omega)};  (\gamma-1)\|(\di\varphi)_{-}\|_{L^\infty(\Omega)}\}\,.
\end{align}
Energy-variational and dissipative weak solutions for the system~\eqref{au} are given in the following definitions.

\begin{dfn}\label{def:envarEulerKorte}
A triple $(\rho,m,E)\in L^\infty(0,T;H^1(\Omega) \cap L^\gamma(\Omega)) \times L^\infty(0,T;L^\frac{2\gamma}{\gamma +1}(\Omega;\R^d)) \times \mathrm{BV}([0,T])$ is called an \textit{energy-variational solution} to the Euler--Korteweg equations~\eqref{au} if $\mathcal{E}(\rho(t),m(t)) \leq E(t) $ for a.e.~$t \in (0,T)$ and  if the energy-variational inequality
\begin{equation}\label{eq:envarEulKor}
\begin{aligned}
    &\left[E- \int_\Omega \rho  \psi + m \cdot\,\varphi \,\dd{x} \right]\bigg|_s^t 
    \\
    &\quad+\int_s^t \int_\Omega \rho \, \partial_t \psi+ m \cdot\nabla \psi + m \cdot \,\partial_t \varphi + \left(\frac{m \otimes m}{\rho}+ \rho^\gamma {I}_d\right):\nabla \varphi \,\dd{x} \dd{\tau} \\
      &\quad+\int_s^t\int_\Omega  \rho\nabla \rho \cdot \nabla (\di
       \varphi) + 
 \frac{1}{2} |\nabla \rho|^2\di \varphi  + \nabla \rho \otimes \nabla \rho:\nabla \varphi \,\dd{x}\dd{\tau}\\
     &\quad+ \int_s^t \tilde{\mathcal{K} }(\varphi)(\mathcal{E}(\rho,m)-E) \,\dd{\tau}\leq 0
\end{aligned}
\end{equation}
holds for all test functions $(\psi, \varphi) \in C^1([0,T];C^1({\ov\Omega}) \times \mathcal{V}) $  and for a.a.~$s,t\in (0,T)$, $s<t$, including $s=0$ with $(\rho(0), m(0)) = (\rho_0,m_0)$.

\end{dfn}

\begin{dfn}\label{def:dissweakEulerKorte}
A triple $(\rho,m,E)\in L^\infty(0,T;H^1(\Omega) \cap L^\gamma(\Omega)) \times L^\infty(0,T;L^\frac{2\gamma}{\gamma +1}(\Omega;\R^d)) \times \mathrm{BV}([0,T])$ is called a \textit{dissipative weak solution} to the Euler--Korteweg equations~\eqref{au} if 
 $E$ is nonincreasing and if
there exists a Reynolds stress $\frakR_1 +\frakR_2I_d$  
consisting of 
$\frakR_1 \in L^\infty_{w^*} (0,T;\mathcal{M}( {\ov\Omega} ; \R^{d\times d}_{\sym,+}))$ 
and 
$\frakR_2\in L^\infty_{w^*} (0,T;\mathcal{M}( {\ov\Omega} ; \R_+)) $ such that \begin{subequations}
\begin{align}
   - \int_\Omega \rho  \psi \,\dd x\bigg|_s^t +\int_s^t \int_\Omega \rho \, \partial_t \psi+ m \cdot\nabla \psi\,\dd{x} \dd{\tau} &= 0,
   \\
   \begin{split}
 - \int_\Omega m \cdot \varphi \,\dd x\bigg|_s^t   + \int_s^t \int_\Omega m \cdot \,\partial_t \varphi + \left(\frac{m \otimes m}{\rho}+ \rho^\gamma {I}_d\right):\nabla \varphi \,\dd{x}\dd \tau \quad &
 \\
+\int_s^t\int_\Omega  \rho\nabla \rho \cdot \nabla (\di\varphi) + 
 \frac{1}{2} |\nabla \rho|^2\di \varphi  + \nabla \rho \otimes \nabla \rho:\nabla \varphi \,\dd{x}\dd{\tau} \ \ &\\
 +\int_s^t \Big(\int_{{{\ov\Omega}}} (\nabla \varphi)_{\sym} :\dd \frakR_1 + \int_{{{\ov\Omega}}} (\di \varphi) \,\dd \frakR_2\Big)\dd{\tau}&= 0 ,
 \end{split}\label{eq:EulKormom}
 \end{align}
 holds for all test functions $(\psi, \varphi) \in C^1([0,T];C^1({\ov\Omega}) \times \mathcal{V}) $  and for a.a.~$s,t\in (0,T)$, $s<t$, including $s=0$ with $(\rho(0), m(0))= (\rho_0,m_0)$,
 and if
 \begin{equation}
     \int_{{{\ov\Omega}}} \dd \max\left\{\frac{1}{2}\tr ( \frakR_1); | \frakR_1|_2\right\}+ \frac{1}{\gamma-1} 
    \int_{{\ov\Omega}}\de \frakR_2 \leq E -  \mathcal{E}(\rho,m)\label{eq:EulKorEn}\,
\end{equation}
\end{subequations}
holds a.e.~in $(0,T)$.
\end{dfn}

\begin{remark}
\label{rem:measure.norm}
Since $\frakR_1(t)$ is an $\R^{d\times d}_{\sym,+}$-valued measure,
the first term on the right-hand side of~\eqref{eq:EulKorEn} 
can be written as
\[
\int_{{{\ov\Omega}}} \dd \max\left\{\frac{1}{2}\tr ( \frakR_1); | \frakR_1|_2\right\}
=\int_{{{\ov\Omega}}} \dd |\frakR_1|_m 
= |\frakR_1|_m (\ov\Omega),
\]
where $|\frakR_1|_m$ denotes the variation of the measure $\frakR_1$ with respect to the norm
\[
|A|_m=\max\left\{\frac{1}{2}| A|_{\tr}; | A|_2\right\}.
\]
By the Riesz representation theorem, we have
\[
 |\frakR_1|_m (\ov\Omega)=\sup\setcL{\int_{\ov\Omega}\Phi:\dd\frakR_1}{\Phi\in C(\ov\Omega;\R^d), \,|\Phi|_m^*\leq 1\text{ in }\ov\Omega}.
\]
with the dual norm
$|A|_m^*=2|A|_{2}+|A|_{\tr}$, compare Lemma~\ref{lem:norm}.
\end{remark}

\begin{remark}[Existence of generalized solutions]
    The existence of energy-variational solutions to~\eqref{au} was recently shown as an application of a general existence result~\cite{Marcel}. 
    While there seems to be no result for the existence of dissipative weak solutions, 
    the existence of dissipative measure-valued solutions was shown in~\cite{GGSW}. 
    For additional literature on the existence theory for the Euler--Korteweg system, see~\cite{audiardhaspot2017eulerkorteweg,giesselmannlattanziotzavaras2017eulerkorteweg,benzonigavagedanchindescombes2007eulerkorteweg}.
\end{remark}

As before, we show that the two solution concepts for the Euler--Korteweg system are equivalent.

\begin{thm}
    Let $\Omega\subset\R^d$ be a bounded Lipschitz domain and $\gamma>1$. Then 
       $(\rho, \f m ,E)$ is   an energy-variational solution in the sense of Definition~\ref{def:envarEulerKorte} 
        if and only if 
         $(\rho, \f m , E)$ is a dissipative weak solution in the sense of Definition~\ref{def:dissweakEulerKorte}.
\end{thm}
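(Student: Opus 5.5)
The plan is to argue exactly as in the proof of Theorem~\ref{thm:EulerComp}, reducing both directions to Lemma~\ref{lem:equi}.

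First, set $\mathbb Y=C^1([0,T];C^1(\ov\Omega)\times\calV)$ with $\calV$ now consisting of $C^2$ fields. Define $\ell:\mathbb Y\to\R$ by collecting all terms of~\eqref{eq:envarEulKor} except $E$ and the weight term, with a sign change. That is, $\ell$ consists of the boundary terms $[\int_\Omega\rho\psi+m\cdot\varphi\,\dd x]|_0^T$ minus the transport, convective, pressure and Korteweg integrals $\int\rho\nabla\rho\cdot\nabla\dv\varphi+\frac12|\nabla\rho|^2\dv\varphi+\nabla\rho\otimes\nabla\rho:\nabla\varphi$.

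Step one is the classical variational argument: replace $(\psi,\varphi)$ by $\alpha(\psi,\varphi)$ and let $\alpha\to\infty$, and take $(\psi,\varphi)=0$. This shows that~\eqref{eq:envarEulKor} is equivalent to two conditions holding together. The first is that $E$ is nonincreasing. The second is
\[
\langle\ell,(\psi,\varphi)\rangle\le\int_0^T\tilde{\mathcal K}(-\varphi)\,\zeta\,\dd t,\qquad \zeta=E-\E(\rho,m),
\]
for all test functions. Here
\[
\tilde{\mathcal K}(-\varphi)=\max\big\{2\|(\nabla\varphi)_{\sym,+}\|_\infty+\|(\dv\varphi)_+\|_\infty,\,(\gamma-1)\|(\dv\varphi)_+\|_\infty\big\}.
\]
For the converse one localizes in time with $\phi^n\to\chi_{[s,t]}$, using weak continuity of $t\mapsto\int m\cdot\varphi$, as in Theorem~\ref{thm:main}. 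This step is routine.

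Step two applies Lemma~\ref{lem:equi}. I would use the same cones $\calC_1=\{(0,A)^T: A\in\R^{d\times d}_{\sym,+}\}$ and $\calC_2=\{(0,\alpha I_d)^T:\alpha\ge0\}$, with $\eta_2=\gamma-1$. The only change from the isentropic case is the norm on $\calL_1$. Instead of the spectral norm with $\eta_1=2$, I take $\eta_1=1$ and the norm $|A|_m^*=2|A|_2+|A|_{\tr}$ of Remark~\ref{rem:measure.norm}. By Lemma~\ref{lem:norm} and the duality in Remark~\ref{rem:measure.norm}, the dual norm on $\calL_1^*$ is $|\cdot|_m$. Hence for positive semi-definite $\frakR_1$,
\[
\|\tilde\frakR_1\|_{\calM(\ov\Omega;\calL_1^*)}=\int_{\ov\Omega}\dd\max\{\tfrac12\tr\frakR_1;|\frakR_1|_2\},
\]
which is exactly the left-hand side of~\eqref{eq:EulKorEn}. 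Once the norms are matched, Lemma~\ref{lem:equi} turns the inequality of step one into the existence of $\frakR_1,\frakR_2$ satisfying~\eqref{eq:EulKormom} and~\eqref{eq:EulKorEn}, and conversely. The identification $\tilde\frakR_1=(0,\frakR_1)^T$ and $\tilde\frakR_2=(0,\frakR_2I)^T$ goes as before.

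The main obstacle is making the right-hand side of~\eqref{eq:estlinear} coincide with $\int\tilde{\mathcal K}(-\varphi)\zeta$. Lemma~\ref{lem:equi} produces $\big\|2|(\nabla\varphi)_{\sym,+}|_2+\tr(\nabla\varphi)_{\sym,+}\big\|_{L^\infty}$. The weight $\tilde{\mathcal K}$ instead contains $2\|(\nabla\varphi)_{\sym,+}\|_\infty+\|(\dv\varphi)_+\|_\infty$, and the two differ for two reasons: $\tr(A_+)\ge(\tr A)_+$, and the supremum of a sum is at most the sum of suprema.

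My first attempt would be to verify that the two expressions yield the same set of admissible functionals $\ell$. The argument is an inequality in each direction.
\begin{itemize}
\item Dissipative to energy-variational is direct. One estimates $\int(\nabla\varphi)_{\sym}:\dd\frakR_1\le\int(\nabla\varphi)_{\sym,+}:\dd\frakR_1$. One then splits the pairing into a spectral part bounded by $|\frakR_1|_2$ and a trace part.
\item Energy-variational to dissipative needs a separate argument. If pointwise matching fails, I would adjust $\calC_1$ or the norm so that the trace contribution is carried by $\dv\varphi$. One way is to decompose $A=(A)_{\sym,0}+\frac1d\tr A\,I$ and use the traceless part. I expect this norm bookkeeping, rather than the functional-analytic part, to require the real care.
\end{itemize}
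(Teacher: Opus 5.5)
Your reduction is the one the paper uses: the same functional $\ell$, the same cones $\calC_1,\calC_2$, the norm $2|A|_2+|A|_{\tr}$ on $\calL_1$ with $\eta_1=1$ and $\eta_2=\gamma-1$, and then Lemma~\ref{lem:equi}. Write $a:=\|(\nabla\varphi)_{\sym,+}\|_{L^\infty}$ (spectral norm) and $b:=\|(\dv\varphi)_+\|_{L^\infty}$.

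The direction dissipative $\Rightarrow$ energy-variational is fine. For $\theta\succeq0$, use $A:\theta\le A_+:\theta\le|A_+|_2\tr\theta\le 2|A_+|_2\,|\theta|_m$. This gives the weight $\max\{2a,(\gamma-1)b\}\le\tilde{\mathcal K}(-\varphi)$. Do not go through $2|A_+|_2+\tr A_+$: for divergence-free $\varphi$ its supremum is at least $3a$, while $\tilde{\mathcal K}(-\varphi)=2a$.

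The converse direction is not proved, and it cannot be proved along these lines. The first reason is that the dual-norm identification is wrong. Lemma~\ref{lem:norm} only covers single Schatten norms. The dual of a sum of norms is the gauge of the Minkowski sum of the dual unit balls,
\[
\theta\mapsto\inf_{\theta=\theta_1+\theta_2}\max\{\tfrac12|\theta_1|_{\tr},|\theta_2|_2\},
\]
and this is not $|\theta|_m$. For example, $2|A|_2+|A|_{\tr}\ge 3A_{11}$, so this dual norm equals $1/3$ at $\theta=e_1\otimes e_1$, whereas $|e_1\otimes e_1|_m=1$. So even if assertion (ii) of Lemma~\ref{lem:equi} were available, (i) would only give a bound strictly weaker than~\eqref{eq:EulKorEn}.

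The second reason is that the discrepancy you flag at the end is a genuine obstruction, not bookkeeping. By the estimate above, every dissipative weak solution satisfies
\[
\langle\ell,(\psi,\varphi)\rangle\le\int_0^T\max\{2a,(\gamma-1)b\}\,\zeta\,\dd t
\quad\text{for all }(\psi,\varphi)\in\mathbb Y .
\]
The energy-variational inequality only gives this with $\tilde{\mathcal K}(-\varphi)=\max\{2a+b,(\gamma-1)b\}$ in place of $\max\{2a,(\gamma-1)b\}$. That weight is strictly larger as soon as $b>0$ and $(\gamma-2)b<2a$; for $\gamma\le2$ this means whenever $b>0$. By Hahn--Banach there are linear functionals on $\mathbb Y$ that obey the bound with $\tilde{\mathcal K}(-\varphi)$ but violate the one with $\max\{2a,(\gamma-1)b\}$, as soon as $\zeta>0$ on a set of positive measure.

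Hence no argument that, like Lemma~\ref{lem:equi}, uses only the linearity of $\ell$ and this bound can give energy-variational $\Rightarrow$ dissipative, whatever cones or norms you choose. Splitting off the traceless part does not help. Structurally, $2a+b$ is a sum of two suprema, which dualizes to a Minkowski-sum (max-type) condition on the defects. The right-hand side of Lemma~\ref{lem:equi}\,(ii) is always a maximum of suprema of pointwise quantities, which is dual to a sum-type condition such as~\eqref{eq:EulKorEn}.

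The paper's proof follows exactly your route and passes over this point. It asserts the equivalence ``according to Lemma~\ref{lem:equi}'' and relies on the dual-norm identity of Remark~\ref{rem:measure.norm}, so it does not supply the missing step either.

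A correct equivalence requires changing one of the two definitions. For example, one can replace $\tilde{\mathcal K}(-\varphi)$ by the weight exactly dual to~\eqref{eq:EulKorEn}. For $d\ge2$ this is $\max\{\|\lambda_1(A)_++\lambda_2(A)_+\|_{L^\infty},(\gamma-1)b\}$, where $A=(\nabla\varphi)_{\sym}$ and $\lambda_1(A)\ge\lambda_2(A)$ are its two largest eigenvalues. This follows from $\sup\{A:\theta:\theta\succeq0,\ |\theta|_m\le1\}=\lambda_1(A)_++\lambda_2(A)_+$.
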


\begin{proof}
    We follow the proof of Theorem~\ref{thm:EulerComp} and apply Lemma~\ref{lem:equi}. 
    To this end, we define the space $\mathbb Y=C^1([0,T];C^1({\ov\Omega})\times \mathcal{V})$  and the functional $\ell : \mathbb Y \to \R$ with
    \begin{align*}
        \langle\ell, (\psi,\varphi)\rangle := {}& \left[ \int_\Omega \rho  \psi + m \cdot\,\varphi \,\dd{x} \right]\bigg|_0^T -\int_0^T \int_\Omega \rho \, \partial_t \psi + m \cdot\nabla \psi  \,\dd x \dd t \\&-\int_0^T \int_{\Omega} m \cdot \,\partial_t \varphi + \left(\frac{m \otimes m}{\rho}+ \rho^\gamma {I}_d\right):\nabla \varphi \,\dd{x} \dd t \\
      &-\int_0^T\int_\Omega  \rho\nabla \rho \cdot \nabla (\di
       \varphi) + 
 \frac{1}{2} |\nabla \rho|^2\di \varphi  + \nabla \rho \otimes \nabla \rho:\nabla \varphi \,\dd{x}\dd t  \,.
    \end{align*}
We define the cones $\calC_1:=\{ z = (0, A)^T\in \R^{(1+d)\times d}\mid A \in \R^{d \times d}_{\sym,+}  \}  $
and $\calC_2:= \{ z = (0, \alpha I_d)^T\in \R^{(1+d)\times d}\mid \alpha \in \R_+ \}$
and the associated linear spaces $\calL_1:=\spa(\calC_1)$ and $\calL_2:=\spa(\calC_2)$
as before. 
While we again equip $\calL_2$ with the norm $ \| z \|_{\calL_2} := |\alpha|$,
we use a different norm on the first space~$\calL_1$,
where we consider a weighted sum of the spectral norm and the trace norm, \textit{i.e.,} 
        $ \|z\|_{\calL_1} :=2 | A |_{2} +| A |_{\tr}$.
         Due to Lemma~\ref{lem:norm}, the dual norm of $\|\cdot\|_{\calL_1}$ with respect to the Frobenius product is given by $\|z\|_{\calL_1^*} =  \max\{1/2| A |_{\tr}, |A|_{2}\}$
         for $z=(0,A)^T$.
         
By the same scaling argument as before, it can be observed that~\eqref{eq:envarEulKor} holds if and only if $E\big|_s^t\leq 0$ for a.e.~$0\leq s<t\leq T$ as well as 
$$
\begin{aligned}
 \langle &\ell , (\psi,\varphi) \rangle \leq \int_0^T \tilde{\mathcal{K} }(-\varphi) [E- \mathcal{E}(\rho , m)]\de t
 \,
\end{aligned}
$$
for all $(\psi,\varphi)\in \bbY$. 
According to Lemma~\ref{lem:equi}, the above inequality is equivalent to 
the existence of $\tilde\frakR_1\in L^\infty_{w^*}(0,T);\mathcal{M}({\ov\Omega};\calC_1))$ and $\tilde\frakR_2\in L^\infty_{w^*}(0,T);\mathcal{M}({\ov\Omega};\calC_2))$ such that
        \begin{align}\label{eq:normfrakRKort}
        \begin{split}
            \langle\ell , (\psi,\varphi)\rangle =\int_0^T\Big( \int_{{\ov\Omega}}
            (\nabla \psi, \nabla \varphi)^T : \dd \tilde\frakR_1 + \int_{{\ov\Omega}}
            (\nabla \psi, \nabla \varphi )^T: \dd\tilde\frakR_2 \Big)\dd t \quad &\text{for all }\varphi\in\mathbb Y,
            \\  
            \| \tilde\frakR_1\|_{\mathcal{M}({\ov\Omega};\calL_1^*)} + \frac{1}{\gamma-1}\| \tilde\frakR_2\|_{\calM({\ov\Omega};\calL_2^*)} \leq \zeta \quad &\text{a.e.~in $(0,T)$} \,,
            \end{split}
        \end{align}
        where $ \zeta= E-\E(\rho,m)$.
        Arguing as before, we obtain 
        $
\tilde\frakR_1:= 
   ( \f 0 ,
    \frakR_1 )^T$
and 
$\tilde\frakR_2:= 
  (  \f 0,
    \frakR_2 I )^T
$ for some elements
$\frakR_1 \in L^\infty_{w^*}(0,T;\calM({\ov\Omega};\R^{d\times d}_{\sym,+}))$ and $\frakR_2 \in L^\infty_{w^*}(0,T;\calM({\ov\Omega}; \R_+))$,
and it holds  
         $ \| \frakR_1\|_{\mathcal{M}({\ov\Omega};\calL_1^*)} = \int_{{{\ov\Omega}}} \dd \max\left\{\frac{1}{2}\tr ( \frakR_1); | \frakR_1|_2\right\}$ 
         and $\| \frakR_2\|_{\calM({\ov\Omega};\calL_2^*)}  = \int_{{\ov\Omega}} \de \frakR_2$.
         In virtue of these identities,
         we obtain the equivalnce of~\eqref{eq:normfrakRKort} and~\eqref{eq:EulKormom}.
         In total, this yields the asserted equivalence of energy-variational and dissipative weak solutions to~\eqref{au}.   
\end{proof}

\section{The Euler--Poisson equations}
\label{sec:eulerpoisson}

For $\Omega\subset\R^d$ a bounded Lipschitz domain 
and $T>0$, we consider the the Euler--Poisson system 
given by
\begin{subequations}\label{eq:EulerPoisson}
\begin{align}
     \partial_t \rho + \di m &= 0 \quad &&\text{in}\; \Omega \times (0,T) ,\label{eq:EulPoi1} \\
     \partial_t m + \di \Big(\frac{m \otimes m}{\rho}\Big)+ \nabla \rho^\gamma&= -\alpha \, m 
     -\rho \nabla V \quad &&\text{in}\; \Omega \times (0,T) , \label{eq:EulPoi2}\\
    - \Delta V &= \rho - \overline{\rho} \quad &&\text{in}\; \Omega \times (0,T)\label{eq:EulPoi3} , \\
       (\rho(0),m(0))&=(\rho_0,m_0) \quad &&\text{in} \;\Omega \, ,
       \label{eq:EulPoi4}
\end{align}
where $ \bar\rho:= \frac{1}{|\Omega|}\int_{\Omega} \rho \,\dd x $ denotes the mean of $\rho$.
Here we follow the sign convention from~\cite{dissweakEulPoisson},
and $\alpha\geq 0$ is a friction coefficient. 
The system is equipped with boundary conditions 
\begin{align}
  m \cdot n&=0 \quad \text{on} \quad \partial \Omega \times (0,T),  \\
 \nabla V\cdot n&=0 \quad \text{on} \quad \partial \Omega \times (0,T).
\end{align}
\end{subequations}

The term $V$ in~\eqref{eq:EulPoi2} can be interpreted as the occurrence of a nonlocal operator of $\rho$ via $ V = (-\Delta)^{-1} (\rho-\bar\rho) $.
Here, $(-\Delta)^{-1} : H^{-1}_{(0)}(\Omega) \to H^1_{(0)}(\Omega)$ denotes the inverse of the Neumann--Laplacian, where 
$ H^1_{(0)}(\Omega)$ is the space of functions $u\in H^1(\Omega)$ with vanishing mean, \textit{i.e.,} such that $ \int_{\Omega}u \,\de x = 0$, and $ H^{-1}_{(0)}(\Omega)$ is its dual space.
More precisely, for $h\in H^{-1}_{(0)}(\Omega)$, 
we have $u=(-\Delta)^{-1}h$ if and only if $u$ is the unique solution to 
$$
\int_{\Omega} \nabla u \cdot \nabla \varphi \,\de x = \langle h , \varphi \rangle \quad \text{ for all } \varphi \in H^1_{(0)} (\Omega)\,.
$$

We define the total energy $\E:\big(L^{\gamma}(\Omega)\cap (H^1(\Omega))^*\big) \times L^{\frac{2\gamma}{\gamma+1}}(\Omega;\R^d) \rightarrow [0,\infty]$ as 
$$\mathcal{E}(\rho,m)=\int_\Omega \eta(\rho(x),m(x))+\frac{1}{2}|\nabla V|^2\dd{x},
\quad \text{where }V = (-\Delta)^{-1} (\rho-\bar\rho),$$ 
for $\eta$ given by~\eqref{eq:defeta}. 
We set $\mathcal{V}=\{\varphi \in C^1(\overline{\Omega};\mathbb{R}^d): n\,\cdot \varphi=0\text{ on }\partial\Omega\}$
and consider the regularity weight $\mathcal{K}:C^1({\ov\Omega} ;\R^d)\times \mathcal{V} \to [0,\infty)$ given by $\mathcal{K}(\psi,\varphi):= \tilde{\mathcal{K} }(\varphi)$
for the effective regularity weight $\tilde{\mathcal{K} }: \mathcal{V} \to [0,\infty)$
with
\begin{align}\label{eq:regweightEulPoi}
      \mathcal{K}(\psi,\varphi):=\tilde{\mathcal{K} }(\varphi):=\max\{
      (2+d)\|(\nabla \varphi)_{\mathrm{sym}}\|_{L^\infty(\Omega;\R^{d\times d})}, (\gamma-1) \|(\di\varphi)_{-}\|_{L^\infty(\Omega)}\}
        \,.
\end{align}
Observe that~\eqref{eq:regweightEulPoi} is similar to the regularity weight~\eqref{eq:regweight.Euler.compr} for the isentropic Euler equations,
where only the negative semidefinite part of $(\nabla\varphi)_{\sym}$ contributes to $\mathcal K$.
In the present situation, one also has to take into account the positive semidefinite part
in order to ensure the existence of energy-variational solutions, 
compare~\cite{Marcel} and Remark~\ref{rem:regweight.ep.finer} below.

In order to derive a reasonable weak solution concept, we reformulate
the Poisson term in~\eqref{eq:EulPoi2} as
$$
-\rho \nabla V = - (\rho - \bar{\rho}) \nabla V  - \bar\rho\nabla  V = \Delta V \nabla V  - \bar\rho \nabla V = \di ( \nabla V \otimes \nabla V ) - \frac{1}{2}\nabla | \nabla V |^2 - \bar\rho \nabla V \,,
$$
where we made use of~\eqref{eq:EulPoi3} and the chain rule. 
We define energy-variational solutions to~\eqref{eq:EulerPoisson} as follows.
\begin{dfn}\label{def:envarEulerPoisson}
A triple 
\[
(\rho,m,E)\in L^\infty(0,T;(H^1(\Omega))^* \cap L^\gamma(\Omega)) \times L^\infty(0,T;L^\frac{2\gamma}{\gamma +1}(\Omega;\R^d)) \times \mathrm{BV}([0,T])
\]
is called an \textit{energy-variational solution} to the Euler--Poisson system~\eqref{eq:EulerPoisson} if it holds $\mathcal{E}(\rho(t),m(t)) \leq E(t) $ for a.e.~$t \in (0,T)$ and  if the energy-variational inequality
\begin{equation}
    \begin{aligned}
    &\left[E- \int_\Omega \rho  \psi + m \cdot\,\varphi \,\dd{x} \right]\bigg|_s^t 
    \\
    &\quad+\int_s^t \int_\Omega \rho \, \partial_t \psi+ m \cdot\nabla \psi + m \cdot \,\partial_t \varphi + \left(\frac{m \otimes m}{\rho}+ \rho^\gamma {I}_d\right):\nabla \varphi 
    - \alpha m\cdot\varphi\,\dd{x} \dd{\tau} \\
      &\quad+\int_s^t\int_\Omega \alpha \frac{|m|^2}{\rho} +\left(\frac{1}{2} |\nabla V|^2  +\bar{\rho} V \right) \di \varphi -\nabla V \otimes \nabla V : \nabla \varphi \,\dd{x}\dd{\tau} \\
     &\quad+ \int_s^t  \tilde{\mathcal{K} }(\varphi) (\mathcal{E}(\rho,m)-E) \,\dd{x}\dd{\tau}\leq 0
\end{aligned}
\end{equation}
holds for all test functions $(\psi, \varphi) \in C^1([0,T];C^1({\ov\Omega}) \times \mathcal{V}) $
and for a.a.~$s,t\in (0,T)$, $s<t$, including $s=0$ with  $(\rho(0), m(0)) = (\rho_0,m_0)$.
\end{dfn}

\begin{remark}[Choice of the regularity weight]
\label{rem:regweight.ep.finer}
    Another possible and finer choice for the regularity weight would be given by
    $$\begin{aligned}
        \mathcal{K}_2(\varphi) := \max\{&2 \| (\nabla \varphi)_{\sym,-}\|_{L^\infty(\Omega;\R^{d\times d})}; 2 \| (\nabla \varphi)_{\sym,+}\|_{L^\infty(\Omega;\R^{d\times d})}+ \| (\di \varphi)_-\|_{L^\infty(\Omega)}\\&;(\gamma-1)  \| (\di \varphi)_-\|_{L^\infty(\Omega)} \}\,.
    \end{aligned}
    $$
Indeed, via elementary estimates, we find 
\begin{align*}
   \int_\Omega& \left(\frac{m \otimes m}{\rho}+ \rho^\gamma {I}_d\right):\nabla \varphi \dd{x} \dd{\tau}+ \left(\frac{1}{2} |\nabla V|^2  \right) \di \varphi -\nabla V \otimes \nabla V : \nabla \varphi  \dd{x}
   \\
   \geq{}& - 2 \| (\nabla \varphi)_{\sym,-}\|_{L^\infty(\Omega;\R^{d\times d})} \int_{\Omega} \frac{|m|^2}{2\rho}\de x - (\gamma-1) \|(\di \varphi)_-\|_{L^\infty(\Omega)} \int_{\Omega} \frac{\rho^\gamma}{\gamma-1} \de x\\&- \left( \|(\di \varphi)_-\|_{L^\infty(\Omega)} + 2 \| (\nabla \varphi)_{\sym,+}\|_{L^\infty(\Omega;\R^{d\times d})} \right) \frac{1}{2}\| \nabla V\|_{L^2(\Omega)}^2
  \\ \geq{}& -\mathcal{K}_2(\varphi) \mathcal{E}(\rho, m)
   \,.
\end{align*}
    Therefore, the nonlinearities are dominated and convexified by $ \mathcal{K}_2(\varphi)\mathcal{E}(\rho,m)$ in the usual way. 
    This is a finer choice than $\tilde{\mathcal{K}}$ above, and it fits into the general assumptions studied in~\cite{Marcel} or~\cite{abramo}. For the equivalence result presented here, we choose the regularity weight $\tilde{\mathcal{K}}$ as it fits better to the previous framework based on Lemma~\ref{lem:equi}. From the norm estimate $ |\cdot|_{\tr} \leq d |\cdot|_2$,
    we find $\mathcal{K}_2(\varphi)\leq\tilde{\mathcal{K}}(\varphi) $
    for all $\varphi\in\calV$. Therefore,  every energy-variational solution for the regularity weight~$\mathcal{K}_2$ is also an energy-variational solution for the regularity weight~$\tilde{\mathcal{K}}$. 
\end{remark}
\begin{dfn}\label{def:dissweakEulerPoisson}
A triple 
\[
(\rho,m,E)\in L^\infty(0,T;H^{-1}(\Omega) \cap L^\gamma(\Omega)) \times L^\infty(0,T;L^\frac{2\gamma}{\gamma +1}(\Omega;\R^d)) \times \mathrm{BV}([0,T])
\]
is called a \textit{dissipative weak solution} to the Euler--Poisson system~\eqref{eq:EulerPoisson} if 
there exists a Reynolds stress $\frakR_1 +\frakR_2I_d$ consisting of
$\frakR _1 \in L^\infty_{w^*} (0,T;\mathcal{M}(\ov \Omega ; \R^{d\times d}_{\sym}))$ and 
$ \frakR_2 \in L^\infty_{w^*} (0,T;\mathcal{M}(\ov \Omega ; \R^{+})) $ such that 
\begin{subequations}
\begin{align}
E \Big|_s^t + \int_s^t \int_{\Omega} \alpha \frac{|m|^2 }{\rho} \, \dd x \dd \tau & \leq 0,\\
   - \int_\Omega \rho  \psi \,\dd x\bigg|_s^t +\int_s^t \int_\Omega \rho \, \partial_t \psi+ m \cdot\nabla \psi\,\dd{x} \dd{\tau} &= 0,
   \\
   \begin{split}
 - \int_\Omega m \cdot \varphi \,\dd x\bigg|_s^t   + \int_s^t \int_\Omega m \cdot \,\partial_t \varphi + \left(\frac{m \otimes m}{\rho}+ \rho^\gamma {I}_d\right):\nabla \varphi -\alpha m\cdot\varphi\,\dd{x}\dd{\tau} 
 \\
      +\int_s^t\int_\Omega \left(\frac{1}{2} |\nabla V|^2  +\bar{\rho} V \right)  \di \varphi - \nabla V \otimes \nabla V : \nabla \varphi \,\dd{x} \dd{\tau}
      \\
      +\int_s^t\int_{\overline{\Omega}} (\nabla \varphi)_{\sym} :\dd \frakR _1+\int_{\overline{\Omega}}\di \varphi  \,\dd \frakR_2 \dd{\tau} &= 0 ,
     \end{split} 
\end{align}
holds for all test functions $(\psi, \varphi) \in C^1([0,T];C^1({\ov\Omega}) \times \mathcal{V}) $
and for a.a.~$s,t\in (0,T)$, $s<t$, including $s=0$ with $(\rho(0), m(0))= (\rho_0,m_0)$,
and if
\begin{equation}
    \frac{1}{2} \int_{\overline{\Omega}} \dd | \frakR_1|_{\tr} + \frac{1}{(\gamma-1) } \int_{{\ov\Omega}} \de \frakR_2 \leq E-  \mathcal{E}(\rho,m)
\end{equation}
\end{subequations}
holds a.e.~in $(0,T)$.
\end{dfn}

\begin{thm}
    Let $\Omega$ be a bounded Lipschitz domain and $\gamma>1$. Then   $(\rho, \f m ,E)$ is an  energy-variational solution in the sense of Definition~\ref{def:envarEulerPoisson} if and only if    $(\rho, \f m , E)$ is a dissipative weak solution in the sense of Definition~\ref{def:dissweakEulerPoisson}.\label{pr:dissEulerPoisson}
\end{thm}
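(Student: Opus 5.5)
We follow the proofs for the isentropic Euler and Euler--Korteweg systems and reduce the statement to Lemma~\ref{lem:equi}. Let $\mathbb Y=C^1([0,T];C^1(\ov\Omega)\times\mathcal V)$. Define $\ell:\mathbb Y\to\R$ as minus the sum of the boundary terms $\int_\Omega\rho\psi+m\cdot\varphi\,\dd x\big|_0^T$ and the time integrals of all flux and source terms in the energy-variational inequality, except the dissipation term $\alpha|m|^2/\rho$. Thus $\ell$ contains the friction term $-\alpha m\cdot\varphi$ and the Poisson terms $(\frac12|\nabla V|^2+\bar\rho V)\dv\varphi-\nabla V\otimes\nabla V:\nabla\varphi$.

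\emph{Step 1: splitting off the energy part.} The friction dissipation must be kept in the energy part. Testing with $(\psi,\varphi)=(0,0)$ gives $E|_s^t+\int_s^t\int_\Omega\alpha|m|^2/\rho\,\dd x\,\dd\tau\le0$, which is the energy inequality of Definition~\ref{def:dissweakEulerPoisson}. Next, replace $(\psi,\varphi)$ by $(\beta\psi,\beta\varphi)$, divide by $\beta$, let $\beta\to\infty$, and take $s=0$, $t=T$ as in the proof of Theorem~\ref{thm:main}. Because $\tilde{\mathcal K}$ is positively $1$-homogeneous, this yields
\[
\langle\ell,(\psi,\varphi)\rangle\le\int_0^T\tilde{\mathcal K}(-\varphi)\,[E-\mathcal E(\rho,m)]\,\dd t
\]
for all $(\psi,\varphi)\in\mathbb Y$. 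For the converse direction, add the energy inequality to this linear inequality, after localizing in time with cutoffs as in Theorem~\ref{thm:main}, to recover the energy-variational inequality. This requires writing $E|_s^t+\int_s^t\int\alpha|m|^2/\rho$ as the combination of the two pieces. The weak time continuity of the linear terms again justifies evaluating at a.e.\ $s,t$.

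\emph{Step 2: choice of cones and norms.} Since $\frakR_1$ is only required to be symmetric, not semidefinite, take $\calC_1=\calL_1=\{(0,A)^T: A\in\R^{d\times d}_{\sym}\}$. This is a cone whose projection is $P_{\calC_1}(\nabla\psi,\nabla\varphi)=(0,(\nabla\varphi)_{\sym})$. Equip it with the norm $\|(0,A)^T\|_{\calL_1}=|A|_2$. Take $\calC_2=\{(0,\alpha I_d)^T:\alpha\ge0\}$ with $\|\cdot\|_{\calL_2}=|\alpha|$, exactly as in Theorem~\ref{thm:EulerComp}, so that $P_{\calC_2}$ of the gradient pair corresponds to $\frac1d(\dv\varphi)_+$ (or, with the normalization used there, to $(\dv\varphi)_+$). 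Choose $\eta_1=2+d$ and $\eta_2=\gamma-1$, and check that the right-hand side of \eqref{eq:estlinear} equals $\int_0^T\tilde{\mathcal K}(-\varphi)\zeta\,\dd t$ with $\zeta=E-\mathcal E(\rho,m)$. Note that $(\nabla(-\varphi))_{\sym}$ has the same norm as $(\nabla\varphi)_{\sym}$, which is why the full symmetric part appears in $\tilde{\mathcal K}$. Lemma~\ref{lem:equi} then gives the equivalence with the existence of $\tilde\frakR_1=(0,\frakR_1)^T$, $\frakR_1\in L^\infty_{w^*}(0,T;\calM(\ov\Omega;\R^{d\times d}_{\sym}))$, and $\tilde\frakR_2=(0,\frakR_2I_d)^T$, $\frakR_2\ge0$, that represent $\ell$. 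Unwinding $\ell$, this representation is exactly the momentum equation of Definition~\ref{def:dissweakEulerPoisson}, and the continuity equation follows by choosing $\varphi=0$.

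\emph{Step 3: identification of the norm bound.} By Lemma~\ref{lem:norm}, the dual of the spectral norm is the trace norm, so $\|\tilde\frakR_1\|_{\calM(\ov\Omega;\calL_1^*)}=|\frakR_1|_{\tr}(\ov\Omega)$ and $\|\tilde\frakR_2\|=\frakR_2(\ov\Omega)$. The constraint from Lemma~\ref{lem:equi} therefore reads $\frac1{2+d}|\frakR_1|_{\tr}(\ov\Omega)+\frac1{\gamma-1}\frakR_2(\ov\Omega)\le E-\mathcal E$.

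\emph{Main obstacle.} This bound carries the weight $\frac1{2+d}$, while Definition~\ref{def:dissweakEulerPoisson} asks for $\frac12$. So the key work is matching the constant, and I expect this to be the delicate step. My first attempt is to choose the $\calL_1$ norm so that its dual reproduces $\frac12|\cdot|_{\tr}$ while the primal reproduces $(2+d)|\cdot|_2$. Since $|A|_{\tr}\le d|A|_2$, the norm $2|A|_2+|\tr A|$ (or $2|A|_2+|A|_{\tr}$ as in the Euler--Korteweg case) is dominated by $(2+d)|A|_2$. The discrepancy between these two norms would then have to be absorbed into the pressure part $\frakR_2$, and the normalization of $\calC_2$ would have to be checked carefully.
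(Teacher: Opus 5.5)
Your Steps 1--3 are exactly the paper's argument. You use the same $\mathbb Y$ and $\ell$ (up to a sign convention), the same cones $\calC_1=\calL_1=\{(0,A)^T: A\in\R^{d\times d}_{\sym}\}$ with the spectral norm and $\calC_2=\{(0,\alpha I_d)^T:\alpha\ge 0\}$, and the same appeal to Lemma~\ref{lem:equi}. The paper stops there; it never fixes $\eta_1$ or addresses the mismatch of constants.

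What this reduction actually yields is what you found in Step~3: the energy-variational inequality with $\tilde{\mathcal K}$ is equivalent to the existence of $\frakR_1,\frakR_2$ with $\frac{1}{2+d}|\frakR_1|_{\tr}(\ov\Omega)+\frac{1}{\gamma-1}\frakR_2(\ov\Omega)\le E-\mathcal E(\rho,m)$. Since $\frac{1}{2+d}\le\frac12$, this gives ``dissipative weak $\Rightarrow$ energy-variational'' as stated. The converse with the factor $\frac12$ is exactly the step you leave open. So your proposal proves only one direction of the theorem, or equivalently the equivalence with $\frac1{2+d}$ in place of $\frac12$ in Definition~\ref{def:dissweakEulerPoisson}.

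(A harmless bookkeeping point: with $\|(0,\alpha I_d)^T\|_{\calL_2}=|\alpha|$, the projection gives $\frac1d(\dv\varphi)_+$ and the dual norm is $d|\beta|$. You therefore need $\eta_2=d(\gamma-1)$, or else the norm $d|\alpha|$ with $\eta_2=\gamma-1$.)

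Your proposed repair cannot close this gap, because Lemma~\ref{lem:equi} is an exact duality.

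\textbf{Shifting into $\frakR_2$ gains nothing.} Use the easy direction (i)$\Rightarrow$(ii) with $\eta_1=2$. \emph{Any} admissible pair obeying $\frac12|\frakR_1|_{\tr}(\ov\Omega)+\frac1{\gamma-1}\frakR_2(\ov\Omega)\le E-\mathcal E(\rho,m)$ then implies the linear inequality with $2\|(\nabla\varphi)_{\sym}\|_{L^\infty}$ in place of $(2+d)\|(\nabla\varphi)_{\sym}\|_{L^\infty}$. This holds however the pair was obtained, in particular after shifting multiples of $I_d$ from $\frakR_1$ into $\frakR_2$. That inequality is stronger than what the energy-variational formulation provides, since a larger weight makes the inequality weaker, and nothing in your argument derives it.

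\textbf{Renorming $\calL_1$ does not help either.} Whenever $\eta_1N=(2+d)|\cdot|_2$, the dual side is exactly $\frac{1}{2+d}|\cdot|_{\tr}$ by Lemma~\ref{lem:norm}. So ``dual $\frac12|\cdot|_{\tr}$, primal $(2+d)|\cdot|_2$'' is impossible. Your candidate $N(A)=2|A|_2+|A|_{\tr}$ fails at both ends. First, $N\le(2+d)|\cdot|_2$, so condition (ii) with $N$ does not follow from the energy-variational inequality. Second, $N\ge 2|\cdot|_2$ forces $N^*\le\frac12|\cdot|_{\tr}$, so it would not yield the $\frac12$-bound anyway.

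In short, Lemma~\ref{lem:equi} pairs the weight $(2+d)\|(\nabla\varphi)_{\sym}\|$ with $\frac{1}{2+d}$, and the weight $2\|(\nabla\varphi)_{\sym}\|$ with $\frac12$. The definitions as written mix the two, and neither your argument nor the paper's one-line appeal to Lemma~\ref{lem:equi} bridges them.
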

\begin{proof}
We can follow the lines of the proofs in the previous two sections, which are very similar. 
This is why we only focus on the differences here. 

We now choose the cones $\calC_1:=\{ z = (0, A)^T\in \R^{(1+d)\times d}\mid A \in \R^{d \times d}_{\sym} \}  $
and $\calC_2:= \{ z = (0, \alpha I)^T\in \R^{(1+d)\times d}\mid \alpha \in \R_+ \}$
with associated linear subspaces $\calL_1:=\spa(\calC_1) = \calC_1$ and $\calL_2:=\spa(\calC_2)$.
Note that the first cone is a full linear subspace, in contrast to the previous examples.  
We equip $\calL_1$ with the same norm as in the compressible Euler case in Section~\ref{sec:compEul}, namely $\|z\|_{\calL_1} := \| A \|_{2} $. On $\calL_2$, we take the usual norm and set $ \|z\|_{\calL_2} := | z | $. Defining 
$$
\begin{aligned}
     \langle \ell, (\psi,\varphi)\rangle ={}&- \int_\Omega \rho  \psi + m \cdot\,\varphi \,\dd{x} \bigg|_0^T \\&+\int_0^T \int_\Omega \rho \, \partial_t \psi+ m \cdot\nabla \psi + m \cdot \,\partial_t \varphi + \left(\frac{m \otimes m}{\rho}{+} \rho^\gamma {I}_d\right):\nabla \varphi 
     -\alpha m\cdot\varphi\,\dd{x} \dd{t} \\
      &+\int_0^T\int_\Omega\left(\frac{1}{2} |\nabla V|^2 +\bar{\rho} V \right) \di \varphi -\nabla V \otimes \nabla V : \nabla \varphi     \,\dd{x}\dd{t} ,
\end{aligned}
$$
we may apply Lemma~\ref{lem:equi} in order to observe the equivalence of 
Definition~\ref{def:envarEulerPoisson} and Definition~\ref{def:dissweakEulerPoisson}. 
In contrast to before, the cone $\calC_1$ does not only consist of elements originating from positive definite matrices,
so that the trace norm $|\cdot|_{\mathrm{tr}}$ does not simplify to the trace of the matrix.  
\end{proof}

\section*{Acknowledgement}
The authors acknowledge financial support by the German Research Foundation (DFG), through grant SPP 2410 ``Hyperbolic Balance Laws in Fluid Mechanics:~Complexity, Scales, Randomness (CoScaRa)'',
Projects No.~526018747 and No.~525716336.
T.~Eiter's research has further been funded by DFG through grant CRC 1114 ``Scaling Cascades in Complex Systems'', Project No.~235221301, Project YIP.

\end{document}